\documentclass[preprint,12pt,authoryear]{elsarticle}
\usepackage[margin=1in]{geometry}
\usepackage{amsmath,amssymb,mathtools,amsthm}
\usepackage{booktabs,longtable,tabularx,array}
\usepackage{pdflscape}

\usepackage{enumitem}
\usepackage{microtype}
\usepackage{natbib}
\usepackage{lmodern}
\usepackage{caption}
\usepackage[hidelinks]{hyperref}
\usepackage{algorithm}
\usepackage{algpseudocode}
\newtheorem{definition}{Definition}

\newtheorem{proposition}{Proposition}

\newcommand{\V}{\mathcal{V}}

\newcommand{\Hh}{\mathcal{H}}
\newcommand{\Tt}{\mathcal{T}}
\newcommand{\Q}{\mathcal{Q}}
\newcommand{\Pp}{\mathcal{P}}
\newcommand{\Nn}{\mathcal{N}}
\newcommand{\A}{\mathcal{A}}
\newcommand{\Am}{\mathcal{A}^{\mathrm{M}}}
\newcommand{\As}{\mathcal{A}^{\mathrm{S}}}
\newcommand{\Ar}{\mathcal{A}^{\mathrm{R}}}

\newcommand{\Az}{\mathcal{A}^{0}}
\newcommand{\Ecol}{E^{\mathrm{col}}}
\newcommand{\Ereq}{E^{\mathrm{req}}}

\newcommand{\eps}{\varepsilon}
\newcommand{\Rsep}{R^{\mathrm{sep}}}

\begin{document}

\begin{frontmatter}

\title{Noise-Aware Fleet Scheduling for On-Demand Urban Air Mobility with Alternative Flight Paths and Cruise Separation}

\author[label1]{Haoye Chen} 
\ead{haoye@kth.se}
\author[label2]{Jan Kronqvist}
\author[label1]{Wilco Burghout}
\author[label3]{Jianjun Wu}
\author[label1]{Zhenliang Ma\corref{cor1}}
\cortext[cor1]{Corresponding author.}
\ead{zhema@kth.se}
\affiliation[label1]{organization={Department of Civil and Architectural Engineering, KTH Royal Institute of Technology},
            city={Stockholm},
            postcode={10044}, 
            country={Sweden}}
\affiliation[label2]{organization={Department of Mathematics, KTH Royal Institute of Technology},
            city={Stockholm},
            postcode={10044}, 
            country={Sweden}}
\affiliation[label3]{organization={School of Economics and Management, Dalian University of Technology},
            city={Dalian},
            postcode={116024}, 
            country={China}}

\begin{abstract}
Multiple candidate flight paths between the same vertiports provide flexibility for managing noise exposure and cruise separation in on-demand urban air mobility (UAM). This paper introduces the fleet scheduling problem with alternative flight paths (FSP-AFP). We develop a time-expanded fleet-flow formulation for a finite aircraft fleet, in which each service or empty repositioning flight is assigned a planar path from a predefined set of alternatives connecting its origin and destination vertiports. These path choices are jointly optimized with cruise altitudes, departure times, and repositioning movements, while maintaining aircraft flow balance and enforcing geometric separation between flights cruising concurrently at the same altitude. The objective combines flight costs, service departure delays, and population-weighted noise exposure. A tailored branch-and-cut algorithm incorporates three complementary clique constructions based on a request-augmented conflict graph, exploiting physical flight conflicts, request exclusivity, and request–route relationships. Computational experiments show that, compared with a baseline implemented using Gurobi’s native lazy-constraint functionality, the proposed algorithm generally obtains stronger lower bounds and finds feasible solutions for all test instances, while generating fewer cuts on average and achieving a 43.16-fold reduction in the mean number of processed search nodes. An operational case study shows that increasing the number of candidate planar routes per vertiport pair from one to four reduces the weighted objective by 6.71\% and modeled noise exposure by 15.14\% under the cruise-separation constraints. Further increases in the number of candidate routes yield only marginal improvement in the objective value. These findings highlight the value of coordinating the use of alternative flight paths with departure scheduling and fleet circulation when evaluating the operational and environmental trade-offs of UAM.

\end{abstract}

\begin{keyword}
Urban air mobility \sep Alternative flight paths \sep Collision avoidance \sep Clique inequalities \sep Branch-and-cut
\end{keyword}

\end{frontmatter}
\section{Introduction}
Urban air mobility (UAM) is being developed as a potential complement to urban transportation, using aircraft to connect locations within and around metropolitan areas. Advances in electrification and aircraft design have renewed interest in passenger services operated between vertiports. Their development, however, depends on addressing operational and societal challenges, including airspace management, safety, noise, and community acceptance \citep{Cohen2021}. These challenges become closely connected when multiple aircraft operate over inhabited areas: flight paths determine where communities are exposed to noise, while the spatial and temporal organization of flights determines how aircraft can share the airspace without collisions. Effective operational planning is therefore essential to understanding how UAM services can accommodate transport demand while maintaining safety separation and limiting their environmental impact.

Route flexibility creates both opportunities and difficulties for this planning task. Alternative paths between the same vertiports can provide spatial separation between flights that would otherwise conflict, offering another choice instead of temporal separation, such as delaying departures. They can also support noise mitigation, although low-noise paths may require longer flight distances and travel times \citep{HILDEMANN2023102356}. These benefits cannot be assessed independently. A noise-motivated detour changes where and when an aircraft occupies the airspace, potentially removing a conflict along its original path while introducing another elsewhere. Similarly, a deviation intended to avoid one aircraft may bring the flight closer to other traffic. Several individually attractive paths may consequently be incompatible when operated simultaneously. Longer routes and conflict-induced departure delays also postpone aircraft availability, affecting subsequent passenger services and empty repositioning. The planning problem is thus to select a compatible combination of routes and schedules across the fleet, accounting for how detours redistribute both noise exposure and aircraft interactions.

These interactions raise a further question for UAM planning: how many alternative flight paths between each pair of vertiports are needed to obtain substantial operational and environmental benefits? Providing more paths creates additional routing and scheduling choices, but some may offer little benefit beyond the alternatives already available. Assessing their value therefore requires examining how successive additions affect noise exposure, flight distance, and service delay across the fleet. We investigate how aircraft separation requirements change the benefits of additional paths, and whether further route expansion can still reduce service delay after most of the noise-reduction benefits have been obtained.

To the best of our knowledge, this paper presents the first UAM fleet-scheduling formulation to integrate alternative planar flight-path selection with fleet flow conservation and continuous-time geometric cruise separation. We refer to this integrated problem as the fleet scheduling problem with alternative flight paths (FSP-AFP). For each vertiport pair, we consider multiple spatially distinct paths that connect the two vertiports directly, without requiring flights to follow a shared network of predefined intermediate junctions and links. These flight paths are jointly optimized with cruise altitudes and departure times for service and empty repositioning flights, minimizing a weighted combination of flight costs, request delays, and population-weighted noise exposure. Alternative paths provide opportunities for noise mitigation and spatial conflict avoidance, while fleet-flow constraints connect successive aircraft activities and account for their effects on subsequent services and repositioning movements. A longer path may therefore support a better overall schedule, whereas a low-noise path may be rejected because of its conflicts with other flights or its effects on aircraft availability. The large number of flight alternatives, however, gives rise to numerous pairwise cruise-separation constraints, whose explicit formulation can impose prohibitive memory requirements. We address this challenge by generating separation constraints dynamically as needed, avoiding the enumeration of the full pairwise constraint set. To improve computational performance, we develop a tailored branch-and-cut algorithm with selective clique separation. The resulting clique inequalities exploit both physical flight conflicts and the mutual exclusivity of alternatives serving the same request to strengthen the linear programming relaxation.

This study connects research on the spatial and environmental dimensions of UAM with research on fleet operations. Noise-aware traffic allocation and flight-path planning have advanced the treatment of community exposure and spatial route selection \citep{GAO2024104740, CHEN2026112176}. Complementary studies have integrated fleet and infrastructure planning with aircraft operating decisions, and coordinated passenger service and rebalancing under safety and energy requirements \citep{SHON2025102720, Pooladsanj2026}. However, the interaction between multiple direct paths for each vertiport pair and the service–repositioning sequences of a finite fleet remains insufficiently examined both in modeling and analysis when noise exposure and geometric cruise separation are considered together. Methodologically, alternative paths expand the set of route–altitude–departure combinations, whose selection is governed by physical conflicts and the coverage that each request be served exactly once. In the linear programming relaxation, pairwise conflict constraints, even together with request coverage, may admit fractional solutions that violate clique inequalities. This motivates relaxation strengthening that exploits both sources of incompatibility. The computational challenge is to identify such strengthening inequalities while controlling the effort spent on geometric conflict detection, cut generation, and repeated linear programming solves.

The contributions of this study are threefold:
    \begin{itemize}
        \item An integrated model and incompatibility-graph representation for on-demand UAM routing and operations. We jointly optimize service and rebalancing flights over multiple direct paths between vertiports, accounting for noise exposure. A request-augmented incompatibility graph combines cruise conflicts with the mutual exclusivity of flight alternatives serving the same request.
        \item A branch-and-cut method exploiting flight and request structure. We develop a selective clique-separation scheme that exploits these incompatibilities to identify and strengthen violated inequalities. Computational experiments benchmark the resulting algorithm against a baseline using Gurobi's native lazy-constraint functionality.
        \item Operational insights into the value of route flexibility. A controlled case study in Stockholm shows that nearly all observed noise-exposure improvements are achieved with a small set of candidate paths per vertiport pair. Further expansion yields only marginal improvement in the weighted objective value.

    \end{itemize}

The rest of the paper is organized as follows: Section \ref{Sec: Literature review} introduces the research gap on routing, modeling, and collisions of UAM systems. Section \ref{sec:methodology} formulates the model with cruise separation. Section \ref{sec:cliques} introduces the clique generation methods and branch-and-cut algorithm. Section \ref{sec:experiments} conducts numerical and operational experiments. Section \ref{sec: conclusion} summarizes the findings, applications, and future directions.

\section{Literature review}
\label{Sec: Literature review}
\subsection{Airspace Modeling and Noise-Aware Routing}
UAM network and operational models differ in how they represent flight connections between vertiports. \cite{GAO2024104740} constructed a multilayer aerial node–link network, with vertiports and airborne waypoints as nodes and flight corridors as links, to allocate traffic while considering community noise and equity. Their model permits multiple routes for each Origin--Destination (OD) pair, but these alternatives are different sequences of connected links through a shared link network, similar to a road network. This differs from explicitly providing several direct flight paths between the same two vertiports, each of which may follow a different geometric detour without having to pass through other vertiports or intersections. \cite{Pooladsanj2026} likewise represented UAM airspace as a directed graph whose nodes correspond to vertiports or route intersections and whose links represent potentially shared sections of flight routes. Their VertiSync policy jointly schedules passenger-serving and rebalancing flights over these connected route sections, while the presented analysis assumes at most one route per OD pair. Other fleet-operation models abstract inter-vertiport movements through OD connections with given travel times: \cite{SHON2025102720} integrated fleet and infrastructure planning with aggregate relocation flows, while \cite{Guo2026} jointly optimized flight assignment, service and deadhead movements, charging, and takeoff–landing operations; both formulations use given OD travel times without selecting among geometrically distinct flight paths. \cite{zhu2019pre} explicitly recognized the advantages of direct vertiport connections and allowed flights to choose among multiple route alternatives while adjusting delays and speeds. Their framework nevertheless processes flight plans sequentially and omits connections between successive flights at vertiports. Together, these studies motivate a gap that retains multiple direct flight paths between each vertiport pair and jointly evaluates their selection with request assignment, departure scheduling, aircraft reuse, and empty rebalancing.

Spatially explicit noise- and risk-aware routing studies provide a complementary perspective by using environmental maps to identify beneficial detours. \cite{KYI2026100231} employed grid-based modified A* routing with population-weighted noise costs. \cite{CHEN2026112176} generated multiple candidate paths over gridded ground-risk and noise maps and subsequently refined high-noise segments. It further incorporated multi-aircraft coordination and fleet-wide path-combination selection, although aircraft missions, departure times, and planning priorities were specified beforehand. \cite{Korringa2025} similarly combined spatial population, communication-coverage, and air-traffic data with A* search to design low-risk AAM corridors. From a delivery-routing perspective, \cite{Tan2024} integrated a flight-noise simulator with customer allocation and depot-based tour optimization. These studies demonstrate the environmental value of spatial route choices and address different aspects of multi-aircraft routing. However, their formulations do not jointly optimize request assignment, departure scheduling, aircraft reuse across successive OD services, and empty repositioning between vertiports. Consequently, the environmental benefits of a path are not evaluated together with the complete sequence of fleet movements needed to serve subsequent requests.

A remaining gap in the reviewed literature is therefore the integration of multiple direct flight-path alternatives between each vertiport pair with environmentally informed fleet scheduling and rebalancing. These alternatives connect the origin and destination without requiring passage through other vertiports or virtual intersections in the air, while allowing different geometric detours.

\subsection{Collisions}
A closely related research area is aircraft conflict resolution within conventional air traffic management, where trajectory adjustments are optimized to maintain required separation between aircraft. \cite{LEHOUILLIER2017696AirCollsionClique} formulated the problem as a minimum-weight maximum-cardinality clique problem, with vertices representing discrete maneuvers and edges linking mutually compatible maneuvers of different aircraft. They developed sequential MILP and flight-level-based large-neighborhood decomposition methods to improve computational scalability. \cite{DIAS2022520} developed nonconvex mixed-integer formulations with continuous speed and heading controls using disjunctive linear separation conditions. They further incorporated flight-level reassignment through a two-step formulation that prioritizes minimizing flight-level changes before resolving the remaining conflicts through horizontal velocity adjustments. \cite{CAFIERI202367Continous} considered simultaneous speed and heading adjustments at a common fixed flight level. They proposed an MINLP formulation for minimizing maneuver deviations and a continuous nonlinear penalty formulation whose zero-penalty solutions satisfy the separation requirements. A broader review of mathematical programming approaches to aircraft conflict resolution is provided in \cite{TS_ATC_review}. However, existing aircraft conflict resolution research assumes a fixed set of flights with predetermined nominal trajectories and focuses on the moving dynamics and tactical separation maneuvers of multiple airplanes. They do not jointly consider passenger assignment, demand fulfillment, fleet scheduling, or empty-vehicle rebalancing, which are common contexts in urban mobility.

Building on conventional air traffic management, studies of UAM safety have addressed strategic flight planning, tactical collision avoidance, and collision-risk assessment. \cite{TANG2021103324} combined obstacle-aware route generation with flight-level assignment to obtain conflict-free 4D flight plans while considering operating costs and fairness among operators. Their model enforces separation at route intersections and overlapping segments, but takes flight requests and departure times as inputs without representing aircraft reuse or empty repositioning across successive services. \cite{Chour2023} jointly selected delivery routes and departure times through a hybrid Monte Carlo tree search framework, balancing ground delay, flight distance, and ground risk under separation requirements. However, aircraft are assumed to be continuously available at depots and leave the modeled system after delivery, so return flights and subsequent assignments do not constrain scheduling decisions. \cite{haba2025routing} formulated simultaneous route selection for eligible flight requests as a maximum weighted independent set problem, accounting for conflicts among candidate routes and with ongoing flights. Their framework selects a compatible batch for immediate departure at each decision epoch, while previously approved trajectories remain fixed. Although each batch can be solved using an exact integer-programming solver, departure times are not jointly optimized across the planning horizon, and aircraft reuse and empty repositioning are not represented. Beyond predeparture planning, \cite{WENG2026105477} combined centralized route guidance with distributed velocity-obstacle collision avoidance for given flight missions, without modeling successive service and repositioning movements. Complementing these planning and control approaches, \cite{Su2025} linked mid-air encounters and mitigation failures to subsequent impacts and fatalities under operational uncertainties. Taken together, these studies advance flight-level planning, control, and risk assessment, but do not jointly schedule passenger service and empty repositioning while maintaining aircraft availability and continuity across successive operations.

More directly linking separation requirements with fleet operations, \cite{Pooladsanj2026} developed VertiSync to jointly schedule passenger-serving and rebalancing movements while maintaining aircraft continuity. Their formulation represents UAM airspace as a directed corridor network and enforces airborne separation through capacity-one sectors, assuming that distinct links are safely separated except near intersections. The presented analysis further assumes at most one route per OD pair. A different modeling setting arises when each vertiport pair has multiple candidate flight paths whose geometric separation is not guaranteed by the network layout. In this setting, concurrent same-altitude flights on nearby paths may violate the prescribed separation distance even when the paths neither intersect nor share a link. The resulting modeling challenge is to coordinate these geometric separation requirements with route choices and the successive service and repositioning movements of a finite fleet.

Conflict graphs provide a general framework for strengthening integer programming formulations through clique inequalities. \cite{ATAMTURK200040} developed conflict-graph techniques for exploiting binary incompatibilities and generating cutting planes. In railway station routing, \cite{ZWANEVELD200114} represented incompatible routing choices in a node-packing formulation and incorporated violated clique inequalities, extended to maximal cliques, into branch-and-cut. For project scheduling, \cite{ARAUJO2020104782} constructed dynamic conflict graphs and combined clique cuts with lifted cover and precedence inequalities. \cite{BRITO2021105176} controlled separation effort through bounded searches over fractional-support graphs, followed by greedy clique extension. More recently, \cite{MOURA2025433} strengthened a representatives formulation for parallel-machine scheduling with stable-set inequalities, with their experiments favoring exact clique separation at the root node. These studies demonstrate that practical clique separation involves balancing relaxation strengthening against the computational cost of finding and adding inequalities. A remaining challenge is to efficiently identify useful clique inequalities by exploiting the interaction between physical flight conflicts and request exclusivity in UAM systems that jointly schedule passenger-serving and empty repositioning flights over multiple direct paths between vertiport pairs.

\section{Model}
\label{sec:methodology}
We consider a homogeneous fleet of drones serving requests between vertiports. Each request specifies an origin, a destination, and a release time. A service flight transports one request directly between its specified vertiports. Empty flights may reposition drones to serve subsequent requests or restore the terminal fleet distribution. For each ordered OD pair, a set of horizontal routes is prescribed in advance. Each flight selects one of these routes and an available cruise altitude.

Every flight is a complete ground-to-ground leg: vertical takeoff, horizontal cruise, and vertical landing. A drone becomes available at the destination when landing is complete and may select a different cruise altitude for its next leg. Drones wait on the ground at vertiports. Ground waiting has no direct cost; a request incurs a delay cost when its flight departs after its release time. At the end of the planning horizon, each vertiport must recover its initial number of drones. This requirement concerns only inventories, so an individual drone need not return to its own starting vertiport.

The separation requirement of safety distance applies only to flights simultaneously cruising at the same altitude. Takeoff and landing occupy a drone for their specified durations but are excluded from this separation test. The cruise-separation requirement applies to pairs of service flights, pairs of rebalancing flights, and pairs consisting of one flight of each type. Ground parking and takeoff-pad capacities are not modeled, and no additional turnaround or charging time is assumed.

Service and rebalancing flights are planned jointly. Rebalancing flights reposition drones between vertiports without serving requests, changing fleet availability and potentially altering the departure times, routes, and cruise altitudes of service flights. The model may accept the additional cost of a rebalancing flight to position a drone for an earlier service departure or a tighter connection between successive services to obtain a better and feasible flight plan. All requests and initial fleet inventories are given in advance, and the complete schedule is constructed without a reference schedule or a schedule-deviation penalty.

\begin{figure}[ht!]
\centering
\includegraphics[width=\linewidth,height=0.7\textheight,keepaspectratio]{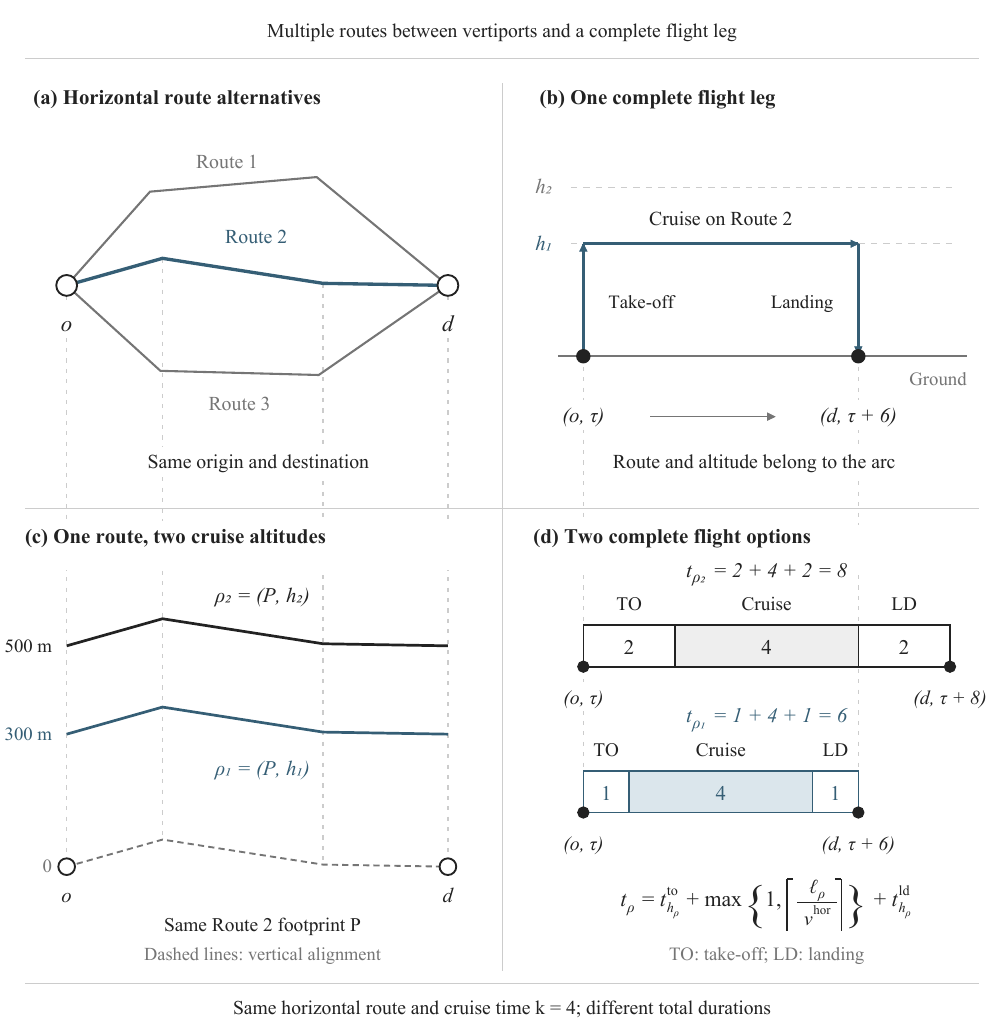}
\caption{Multiple routes between vertiports and a complete flight leg.}
\label{fig:multiple-routes}
\end{figure}

\subsection{Sets, parameters, and the expanded network}
\label{subsec:physical-setting}

Let \(\V\) be the set of vertiports, \(\Hh\) the set of available altitude layers, and $\Tt=\{0,1,\ldots,T\}$ the discrete planning horizon. A request $q\in\Q$ is the tuple $q=(o_q,d_q,r_q)$, where \(o_q,d_q\in\V\), \(o_q\neq d_q\), and \(r_q\in\Tt\) is the release time of the request.

For each directional OD pair \((o,d)\),  let $\Pp_{od}$ contain its route--altitude options. An option $\rho\in\Pp_{od}$ specifies a cruise altitude $h_\rho\in\Hh$, a horizontal polyline $P_\rho=(p_{\rho,0},\ldots,p_{\rho,m_\rho})$ of length $\ell_\rho>0$, a route--altitude option cost $C_\rho$, and integer takeoff and landing durations $t^{\mathrm{to}}_{h_\rho}$ and $t^{\mathrm{ld}}_{h_\rho}$. Thus, a horizontal route available at two altitudes gives two distinct options. The supplied cost $C_\rho$ includes the distance, noise exposure, and altitude-related components assigned to that option. With horizontal speed \(v^{\mathrm{hor}}\), its discrete duration is
 $ t_\rho
  =
  t^{\mathrm{to}}_{h_\rho}
  +\max\left\{1,\left\lceil
       \frac{\ell_\rho}{v^{\mathrm{hor}}}
     \right\rceil\right\}
  +t^{\mathrm{ld}}_{h_\rho}.$
 Figure~\ref{fig:multiple-routes} illustrates the route choice and its representation as a complete flight leg.

{
The time-expanded network has node set
\begin{equation}
 \Nn=(\V\times\Tt).
 \label{eq:state-space}
\end{equation}
A state $(v,t)$ represents drones available on the ground at vertiport $v$
at time $t$. Cruise altitude is an attribute of a moving arc, not of a node.
Let $D_v\in\mathbb Z_{\geq0}$ be the initial inventory at $v$ and $D=\sum_{v\in\V}D_v$ the fleet size. Assume the same initial and final inventory at every vertiport. The indexed notation below also states the condition. The net supply is
\begin{equation}
 b_n=\begin{cases}
 D_v, & n=(v,0),\quad v\in\V,\\
 -D_v, & n=(v, T),\quad v\in\V,\\
 0, & \text{otherwise}.
 \end{cases}
 \label{eq:supply}
\end{equation}}

\paragraph{Arc classes}
\label{subsec:arc-pool}

The arc set is partitioned as
\begin{equation}
 \A=\As\cup\Ar\cup\Az,\qquad
 \Am=\As\cup\Ar.
 \label{eq:arc-partition}
\end{equation}
The moving arcs $\Am$ represent service and rebalancing flights. Structural arcs $\Az$ represent ground waiting. Each arc $a \in \A$ has tail $u_a$, head $v_a$, departure $\tau_a$, arrival $\bar\tau_a$, and cost $c_a$. A moving arc also records its route option $\rho_a$, altitude $h_a=h_{\rho_a}$, and trajectory $P_\rho$. Parallel arcs remain distinct when their requests, routes, altitudes, or departure choices differ, even if they share ground endpoints. Note that takeoff and landing belong to moving arcs.

\paragraph{Service arcs $\As$}
For each request $q\in\Q$, route--altitude option
$\rho\in\Pp_{o_qd_q}$, and departure time $\tau\in\Tt$
satisfying $\tau\ge r_q$ and $\tau+t_\rho\le T$,
the service arc $a=(q,\rho,\tau)$ has
\begin{align}
  u_a &= (o_q,\tau), \\
  v_a &= (d_q,\tau+t_\rho),
  \label{eq:service-incidence}\\
  c_a &= C_\rho+c^{\mathrm{delay}}(\tau-r_q),
  \label{eq:service-cost}
\end{align}
where $c^{\mathrm{delay}}\geq0$ is the cost per unit of departure delay. Departure marks the start of takeoff, and arrival marks the completion of landing. Let \(\As_q\subseteq\As\) denote all service arcs for request \(q\), covering every route option \(\rho\in\Pp_{o_qd_q}\) and every departure time \(\tau\in\Tt\) satisfying \(r_q\le\tau\le T-t_\rho\).

\paragraph{Rebalancing arcs $\Ar$}
{
For $o\ne d$, $\rho\in\Pp_{od}$, and $\tau\in\Tt$ with $\tau+t_\rho\leq T$, a rebalancing arc connects $(o,\tau)$ to $(d,\tau+t_\rho)$. It occupies one drone for the complete flight but serves
no request.} Its cost is
\begin{equation}
 c_a=\gamma^{\mathrm{rb}}C_\rho,
 \label{eq:rebal-cost}
\end{equation}
where $\gamma^{\mathrm{rb}}\geq0$ is the rebalancing cost multiplier.

\paragraph{Ground-waiting arcs $\Az$}
{
For $v\in\V$ and $t<T$, an arc $(v,t)\longrightarrow(v,t+1)$ carries ground inventory at zero cost. Waiting after a drone's final flight is represented in the same way until $T$ and generates no request-delay charge. The ground-waiting arcs serve as structural arcs connecting moving arcs.}

\begin{table}[ht!]
\centering
\caption{Principal sets and parameters.}
\label{tab:set-notation}
\label{tab:parameter-notation}
\small
{
\begin{tabularx}{\textwidth}{@{}p{0.26\textwidth}X@{}}
\toprule
Symbol & Definition \\
\midrule
$\V,\Hh,\Tt,\Q$ & Vertiports, cruise altitudes, discrete times, and requests. \\
$\Pp_{od}$ & Route--altitude options for ordered pair $(o,d)$. \\
$\Nn$ & Ground states $(v,t)$ that $v \in \V, \  t \in \Tt$. \\
$\As,\Ar,\Az$ & Service, rebalancing, and ground-waiting arcs. \\
$\Am,\Az$ & Moving arcs $\As\cup\Ar$ and structural arcs $\Az$. \\
$\As_q$ & Service alternatives for request $q \in \Q$. \\
$T,D_v,D$ & Last planning time, initial and final inventory at $v$, and fleet size. \\
$(o_q,d_q,r_q)$ & Origin, destination, and release time of request $q \in \Q$. \\
$(h_\rho,P_\rho,\ell_\rho,C_\rho)$ & Cruise altitude, polyline, length, and cost of option $\rho \in \Pp_{od}$. \\
$t_h^{\mathrm{to}},t_h^{\mathrm{ld}}$ & Takeoff and landing durations for cruise altitude $h \in \Hh$. \\
$v^{\mathrm{hor}},t_\rho$ & Reference cruise speed and full leg duration. \\
$c^{\mathrm{delay}},\gamma^{\mathrm{rb}}$ & Departure-delay cost and rebalancing cost multiplier. \\
$\Rsep$ & Required horizontal separation during same-altitude cruise. \\
\bottomrule
\end{tabularx}}
\end{table}

Table \ref{tab:set-notation} summarizes the sets and parameters used in the model.

\subsection{Formulation}
\subsubsection{Decision variables}

Table \ref{tab:arc-classes} describes the decision variables. For each moving arc $a\in\Am$, let moving variable $x_a\in\{0,1\}$ indicate its selection. For each structural arc $a\in\Az$, let $f_a$ be its drone flow. We use continuous structural variables with $0\leq f_a\leq D$ for $a \in \Az$. This relaxation does not change the optimal value, as Proposition \ref{prop:structural-tu} shows. Let $x=(x_a)_{a\in\Am}$ and $f=(f_a)_{a\in\Az}$ denote the vectors of moving and waiting-flow variables, respectively. 

\begin{table}[ht!]
\centering
\caption{Arc classes and variable domains.}
\label{tab:arc-classes}
\label{tab:variables}
\small
{
\begin{tabularx}{\textwidth}{@{}p{0.12\textwidth}p{0.24\textwidth}p{0.24\textwidth}X@{}}
\toprule
Arc set & Indexing & Variable domain & Cost \\
\midrule
$\As$ & Request, option, departure & $x_a\in\{0,1\}$ & $C_{\rho_a}+c^{\mathrm{delay}}(\tau_a-r_q)$ \\
$\Ar$ & Option, departure & $x_a\in\{0,1\}$ & $\gamma^{\mathrm{rb}}C_{\rho_a}$ \\
$\Az$ & Vertiport, time & $0\leq f_a\leq D$ & $0$ \\
\bottomrule
\end{tabularx}}
\end{table}

{
\begin{proposition}[Structural-flow integrality]
\label{prop:structural-tu}
Fix the moving variables to an integral vector $x$. If the remaining problem in the waiting-flow variables $f$ after fixing $x$ is feasible, it has an integral optimal solution when the structural variables are continuous.
\end{proposition}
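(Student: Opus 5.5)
With the moving variables fixed at an integral vector $x$, the remaining problem in $f$ is a network-flow problem on the ground-waiting arcs $\Az$ alone. The plan is to write it explicitly and exploit its special structure. The flow-balance constraint at each node $n=(v,t)\in\Nn$ reads
\[
\sum_{a\in\Az:\,u_a=n} f_a-\sum_{a\in\Az:\,v_a=n} f_a
= b_n-\sum_{a\in\Am:\,u_a=n} x_a+\sum_{a\in\Am:\,v_a=n} x_a =: \tilde b_n .
\]
Since $b_n$ and $x$ are integral, every $\tilde b_n$ is an integer. The bounds are $0\le f_a\le D$, with $D$ an integer. All waiting arcs carry zero cost, so every feasible $f$ is optimal, and it suffices to show that a feasible system admits an integral solution.

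I would argue this in two ways, using the simpler one as the main proof. The general route is total unimodularity. The constraint matrix restricted to $f$ is the node--arc incidence matrix of the directed graph with arcs $\Az$, which is totally unimodular. Appending the identity rows for the bounds preserves total unimodularity. With integral right-hand sides, every vertex of the nonempty polyhedron is integral. Because the objective in $f$ is constant (zero), an optimal vertex exists whenever the polyhedron is nonempty and has a vertex. The polyhedron is bounded by $0\le f\le D$, so vertices exist.

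The more concrete route exploits the fact that waiting arcs form disjoint chains $(v,0)\to(v,1)\to\cdots\to(v,T)$, one per vertiport. On each chain, flow balance determines $f$ uniquely by forward recursion:
\[
f_{(v,t)\to(v,t+1)}=\sum_{s\le t}\tilde b_{(v,s)} .
\]
This is a partial sum of integers and hence an integer. Therefore any feasible $f$ is the unique solution and is automatically integral. The only subtle point is bookkeeping: $\tilde b$ at $(v,0)$ includes the initial supply $D_v$ minus the departures at time $0$, and consistency at $(v,T)$ is exactly the feasibility assumption. No real obstacle is expected; the main care is stating the sign conventions consistently with \eqref{eq:supply}.
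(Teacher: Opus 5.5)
Your proposal is correct. Your secondary argument is essentially the paper's proof, which phrases it for an arbitrary linear objective. That argument runs: the incidence matrix of $\Az$ is totally unimodular, appending the bound rows preserves this, the right-hand side $b-B_Mx$ is integral, and boundedness guarantees an optimal vertex.

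Your main argument is genuinely different. Instead of polyhedral theory, you use that $\Az$ consists only of the arcs $(v,t)\to(v,t+1)$. The structural network is therefore one directed path per vertiport, and the balance equations can be solved by forward substitution, $f_{(v,t)\to(v,t+1)}=\sum_{s\le t}\tilde b_{(v,s)}$.

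This route is more elementary and yields a stronger conclusion. The residual system has at most one solution, so integrality holds for any objective in $f$; the zero waiting cost is not even needed. In fact $f$ is an affine function of $x$ even at fractional node-LP solutions. The waiting variables could therefore be eliminated entirely, with $f\ge 0$ becoming cumulative ground-inventory constraints on $x$---a structural insight the TU proof does not expose.

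What the TU route buys in exchange is robustness. It does not rely on the waiting arcs forming simple paths. It would still hold under enrichments of the structural network (e.g., parallel waiting arcs or additional structural arcs) that destroy uniqueness.

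One minor imprecision: consistency at $(v,T)$ is exactly solvability of the equations, whereas feasibility additionally requires the partial sums to lie in $[0,D]$. Since feasibility is assumed, this does not affect your conclusion.
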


\begin{proof}
The residual conservation equations are $B_0f=b-B_Mx$, where $B_0$ and $B_M$ are the node--arc incidence matrices of the structural and moving arcs, respectively. Since there is no additional constraint on structural arcs except the basic flow conservation, the matrix $B_0$ is totally unimodular. Its right-hand side is integral because the initial supplies, sink demands, and fixed moving variables are integral. All structural bounds $0\leq f_a\leq D$ are integral as well. The resulting bounded network-flow polyhedron has only integral extreme points. A linear objective therefore attains an integral optimum whenever the residual problem is feasible.
\end{proof}}

This proposition does not establish total unimodularity of the full model: request coverage and collision constraints couple the moving arcs outside the network incidence structure. It allows waiting-flow variables to remain continuous and not require branching. Service and rebalancing variables must still be binary, relaxed in each node LP, and have their integrality enforced through branching. Once the moving arcs are fixed to zero or one, the remaining waiting-flow problem has an integral optimal solution whenever it is feasible.

\subsubsection{Collision condition}
\label{subsec:collision}
We consider collision only at the cruise stage represented by moving arcs. Specifically, the separation rule is applied only while a drone traverses the horizontal route polyline $P_\rho=(p_{\rho,0},\ldots,p_{\rho,m_\rho})$ for route-altitude option $\rho$. For a moving arc \(a\), define its cruise interval
\begin{equation}
  I_a=
  [\tau_a+t^{\mathrm{to}}_{h_a},
   \bar\tau_a-t^{\mathrm{ld}}_{h_a}].
  \label{eq:cruise-interval}
\end{equation}
Let \(\gamma_a(t)\in\mathbb{R}^2\) be the piecewise-linear position along the polyline, parameterized at constant horizontal speed over \(I_a\).

\begin{definition}[Exact collision]
\label{def:collision}
Two distinct moving arcs \(a,b\in\Am\) conflict when
\begin{equation}
  h_a=h_b,\qquad
  I_a\cap I_b\neq\varnothing,\qquad
  \min_{t\in I_a\cap I_b}
  \|\gamma_a(t)-\gamma_b(t)\|_2
  <\Rsep.
  \label{eq:collision-predicate}
\end{equation}
Let $\Ecol$ be the set of unordered pairs satisfying this condition.
\end{definition}

We first use trajectory bounding boxes, together with altitude and time-overlap checks, to identify potentially colliding arc pairs, substantially reducing the number of pairs requiring exact evaluation. The minimum in \eqref{eq:collision-predicate} is computed exactly for every overlapping pair of linear motion segments by minimizing a convex quadratic in their common time interval.

\subsubsection{Formulation}
\label{subsec:exact-formulation}

{
For $n\in\Nn$, let $\delta^+(n)=\{a\in\A:u_a=n\}$ and
$\delta^-(n)=\{a\in\A:v_a=n\}$. The FSP-AFP formulation is:}

\begin{subequations}
\label{model:exact}
\begin{align}
 \min\quad &\sum_{a\in\Am}c_ax_a
 &&\label{eq:objective}\\
 \text{s.t.}\quad
 &\sum_{a\in\delta^+(n)\cap\Am}x_a
  +\sum_{a\in\delta^+(n)\cap\Az}f_a
  -\sum_{a\in\delta^-(n)\cap\Am}x_a
  -\sum_{a\in\delta^-(n)\cap\Az}f_a=b_n
 &&\forall n\in\Nn,\label{eq:flow}\\
 &\sum_{a\in\As_q}x_a=1
 &&\forall q\in\Q,\label{eq:coverage}\\
 &x_a+x_b\leq1
 &&\forall\{a,b\}\in\Ecol,\label{eq:pairwise-collision}\\
 &x_a\in\{0,1\}
 &&\forall a\in\Am,\label{eq:service-rebalance-domain}\\
 &0\leq f_a\leq D
 &&\forall a\in\Az.\label{eq:structural-domain}
\end{align}
\end{subequations}

Objective \eqref{eq:objective} summarizes the cost associated with moving arcs. Each arc contains distance, noise exposure, altitude, and request delay costs. Conservation equations~\eqref{eq:flow} route the fleet from its initial supplies to its final sinks. Request coverage constraints~\eqref{eq:coverage} ensure every request is served once, and collision constraints~\eqref{eq:pairwise-collision} exclude physically conflicting flight pairs. The pairwise formulation defines the full feasible set. 

Multiple requests, route--altitude options, and departure times give rise to a large set of moving arcs. The number of pairwise collision constraints can grow quadratically with the number of moving arcs. Including all such constraints explicitly can be computationally expensive and may exceed available memory. In the algorithm, collision
constraints are generated as needed and strengthened by the clique inequalities developed next.

\section{A clique-based branch-and-cut algorithm}
\label{sec:cliques}
There are two main challenges in solving model \eqref{model:exact}: the large number of collision constraints and a weak LP relaxation that can lead to a large search tree. We address the first by generating collision constraints only when needed. To strengthen the relaxation, we construct clique inequalities using physical conflicts and incompatibilities among alternatives for the same request. These inequalities are combined with branching rules that exploit request structure and congestion information in a tailored branch-and-cut algorithm.

This section is organized as follows: Section \ref{subsubsec:augmented-incompatibility-graph} introduces the incompatibility graphs. Section \ref{subsec:clique-methods} develops three constructions of clique inequalities based on these graphs. Section \ref{subsec:separation-acceleration} describes the separation procedure. Section \ref{subsec:branching} presents the branch-and-cut algorithm, including its node selection and branching rules.

\subsection{Exact and request-augmented incompatibility graphs}
\label{subsubsec:augmented-incompatibility-graph}
{The separation procedure uses a graph whose vertices are individual moving arcs. Service alternatives for different requests remain distinct vertices even when they have identical trajectories. Let $G^{\mathrm{col}}=(\Am,\Ecol)$ denote the physical collision graph, with edges defined by the continuous-time condition in Definition~\ref{def:collision}.}

\begin{figure}[ht!]
  \centering
  \includegraphics[width=\linewidth]{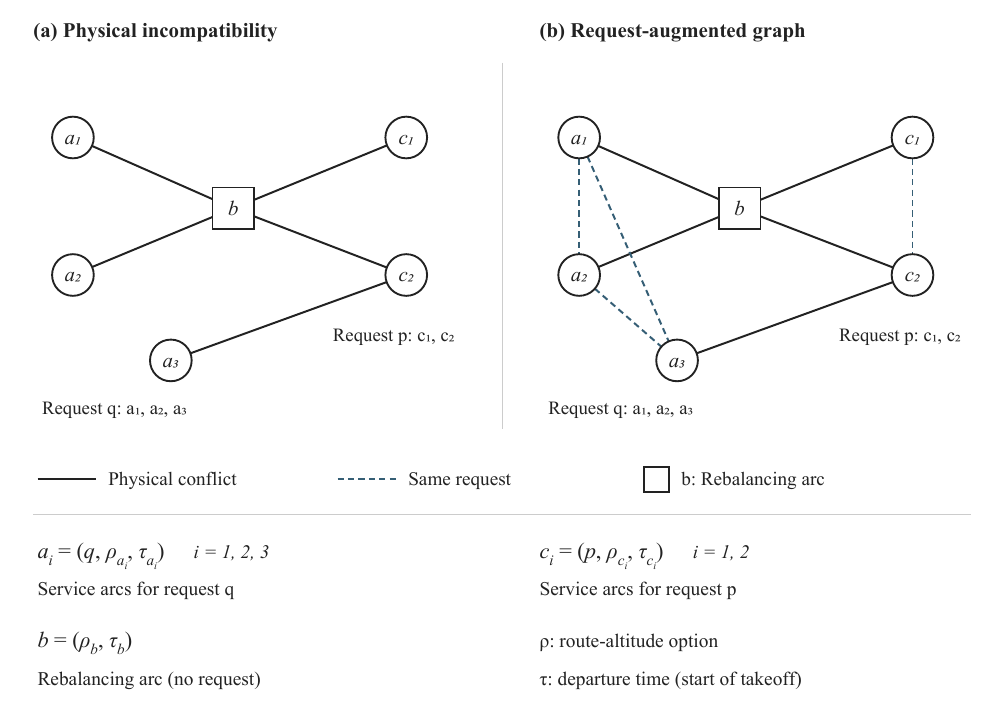}
  \caption{A physical collision graph and its request-augmented
  counterpart.}
  \label{fig:request-augmented}
\end{figure}

Request coverage introduces logical incompatibility between distinct service alternatives for the same request. We define the corresponding edge set as
\begin{equation}
  \Ereq
  = \bigcup_{q\in\Q}
    \bigl\{\{a,b\}:a,b\in\As_q,\ a\ne b\bigr\}.
  \label{eq:request-edges}
\end{equation}
Combining these logical incompatibilities with physical conflicts gives the request-augmented incompatibility graph
\begin{equation}
  G^+=(\Am,\Ecol\cup\Ereq).
  \label{eq:augmented-graph}
\end{equation}
The two edge sets need not be disjoint: two service alternatives for the same request may also conflict physically. Figure~\ref{fig:request-augmented} illustrates how the same-request edges augment the physical collision graph.

Since the endpoints of every edge in $G^+$ are mutually exclusive, the moving arcs selected by any feasible integer solution form a stable set of $G^+$. Consequently, at most one arc can be selected from any clique $K$ of $G^+$, yielding the valid inequality $\sum_{a\in K}x_a\leq 1$. This observation underlies the clique inequalities developed below.

\paragraph{Illustrative example}
{Consider one request $q$ with alternatives $\As_q=\{a_1,a_2\}$ and a rebalancing arc $b$. Suppose $\{a_1,b\},\{a_2,b\}\in\Ecol$ but $\{a_1,a_2\}\notin\Ecol$. The physical graph is a path. The request edge $\{a_1,a_2\}$ completes a triangle in $G^+$, yielding}
\begin{equation}
 x_{a_1}+x_{a_2}+x_b\le1.
 \label{eq:example-augmented-clique}
\end{equation}
{The point $\bar x_{a_1}=\bar x_{a_2}=\bar x_b=1/2$ satisfies $\bar x_{a_1}+\bar x_{a_2}=1$ and both physical pairwise inequalities, but its activity in~\eqref{eq:example-augmented-clique} is $3/2$. Indeed, coverage and the clique inequality together imply $x_b=0$.}
{This example isolates the strengthening of the coverage-and-conflict relaxation as shown by Figure~\ref{fig:triangle-example}.}

\begin{figure}[h!]
 \centering
 \includegraphics[width=\linewidth]{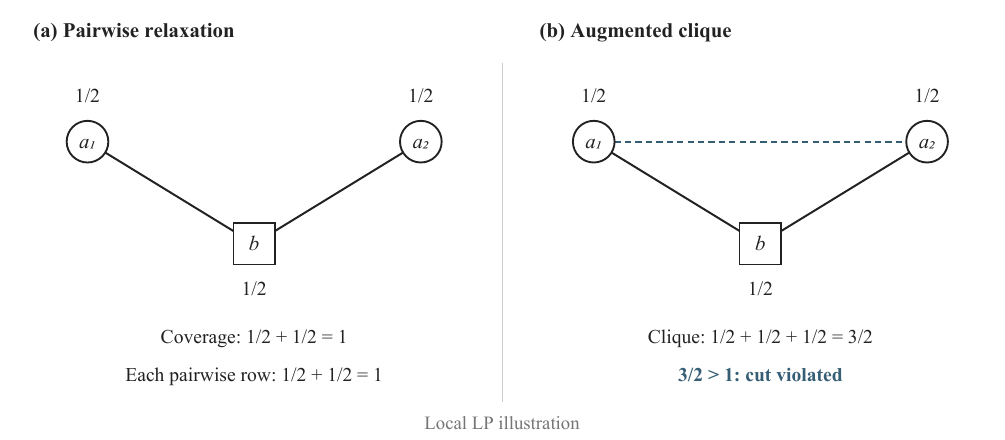}
 \caption{The three-vertex example: adding a same-request edge turns a physical path into a clique.}
 \label{fig:triangle-example}
\end{figure}

The graph $G^+$ may contain exponentially many cliques, making exhaustive enumeration impractical. Therefore, the separation procedure searches for violated clique inequalities of $G^+$ using greedy constructions guided by the current LP solution. The search may not identify every violated clique inequality. Each generated inequality is globally valid for the original problem, since at most one of its mutually incompatible moving arcs can be selected.

\subsection{Three clique generation methods}
\label{subsec:clique-methods}

We develop three complementary constructions for separating violated clique inequalities of $G^+$. The first exploits shared physical trajectories, the second combines physical conflicts with same-request incompatibilities, and the third extends a detected conflict to further departure alternatives of a given request and route. Let $\bar x$ denote the current LP solution.

\begin{definition}[Trajectory slot]
\label{def:slot}
The trajectory slot of a moving arc $a$ is
$\sigma(a)=(\rho_a,\tau_a)$. For each slot $s$, define
\[
  \Am_s=\{a\in\Am:\sigma(a)=s\},
  \qquad
  X_s=\sum_{a\in\Am_s}x_a.
\]
\end{definition}

Since the route option $\rho_a$ specifies the cruise altitude, all arcs in $\Am_s$ share the same departure time, arrival time, and physical trajectory. They may nevertheless represent service alternatives for different requests or a rebalancing flight. The quantity $X_s$ denotes their aggregate flow on slot $s$.

\paragraph{I. Physical slot cliques}
Distinct flights assigned to the same slot would coincide throughout their cruise intervals. The arcs in $\Am_s$ are therefore mutually incompatible, giving the same-slot inequality
\begin{equation}
  X_s=\sum_{a\in\Am_s}x_a\le1.
  \label{eq:same-slot-clique}
\end{equation}

More generally, consider a set $K$ of slots whose trajectories are pairwise conflicting. Physical incompatibility is independent of request labels, so every arc associated with one slot in $K$ conflicts with every arc associated with another. Together with the incompatibilities within each slot, this implies that $\bigcup_{s\in K}\Am_s$ is a clique of $G^+$. Hence, for a clique $K$, the associated inequality is:
\begin{equation}
  \sum_{s\in K}X_s
  =\sum_{s\in K}\sum_{a\in\Am_s}x_a\le1.
  \label{eq:physical-slot-clique}
\end{equation}
Candidate slot cliques are sought among slots with positive activity $\bar X_s=\sum_{a\in\Am_s}\bar x_a$. For each candidate $K$, the inequality includes all moving-arc variables associated with its slots, including those with zero current value.

\paragraph{II. Request-augmented cliques}
Alternatives for the same request are mutually exclusive even when their trajectories do not conflict. These logical incompatibilities can therefore complete cliques that would not be available from physical conflicts alone. We search for cliques on positive-valued moving arcs in $G^+$. For a clique $K^+$, the associated inequality is
\begin{equation}
  \sum_{a\in K^+}x_a\le1.
  \label{eq:request-augmented-clique}
\end{equation}
Its validity follows because each pair of members is excluded either by physical separation or by request coverage.

To complement the first construction, we focus on cliques containing both a physical conflict between arcs serving different requests (or involving a rebalancing arc) and a same-request incompatibility. Note that Method II serves as another search guidance that is not necessarily stronger than those generated by Method~I.

\paragraph{III. Request--route lifts}
A physical conflict can also be extended using the mutual exclusion of departure alternatives for a fixed request and route. Let $q$ be a request, let $a\in\Am\setminus\As_q$ be an outside moving arc, and
choose $\rho\in\Pp_{o_qd_q}$. Define
\begin{equation}
  L(a,q,\rho)
  =\{b\in\As_q:\rho_b=\rho,\ \{a,b\}\in\Ecol\}.
  \label{eq:route-lift-support}
\end{equation}
This set contains all feasible departure alternatives of request $q$ on route option $\rho$ that conflict physically with $a$. Every member of $L(a,q,\rho)$ is incompatible with $a$, and any two members are mutually exclusive because they serve the same request. Thus, whenever $L(a,q,\rho)$ is nonempty, $\{a\}\cup L(a,q,\rho)$ is a clique of $G^+$ and yields
\begin{equation}
  x_a+\sum_{b\in L(a,q,\rho)}x_b\le1.
  \label{eq:route-lift}
\end{equation}

A detected conflict between $a$ and an alternative of request $q$ using $\rho$ provides a starting point for this construction. The resulting set $L(a,q,\rho)$ is defined by the exact collision condition over all feasible departures, irrespective of their current LP values. Unlike the positive-support search in Method~II, this extension may therefore include zero-valued service alternatives.

Let $C$ contain all moving arcs appearing in a candidate clique inequality, including any zero-valued arcs included by slot expansion or request--route lifting. The inequality is considered for addition only if
\[
  \sum_{a\in C}\bar x_a>1+\eps^{\mathrm{sep}}.
\]
The comparison among three types of cliques are listed in Table \ref{tab:clique-generation-mechanisms}. The searching and selection of these violated inequalities is described in Section~\ref{subsec:separation-acceleration}.

\begin{table}[tbp]
\centering\small
\caption{The three clique generation methods.}
\label{tab:clique-generation-mechanisms}
\begin{tabularx}{\linewidth}{@{}>{\raggedright\arraybackslash}p{.18\linewidth}>{\raggedright\arraybackslash}X>{\raggedright\arraybackslash}X@{}}
\toprule
Method & Basis of the construction & Support of the inequality\\
\midrule
I. Physical slot
& One slot or several pairwise-conflicting slots
& All moving arcs in the selected slots\\
II. Request-augmented
& Physical and same-request incompatibilities among positive-valued arcs
& The moving arcs in the selected clique\\
III. Request--route
& One outside arc and the mutually exclusive alternatives of a request--route pair
& The outside arc and all departure alternatives of that pair that conflict with it\\
\bottomrule
\end{tabularx}
\end{table}

\subsection{Separation}
\label{subsec:separation-acceleration}

Given a node LP solution $\bar x$, separation searches for a candidate inequality whose violation
\begin{equation}
  v(C)=\sum_{a\in C}\bar x_a-1
  \label{eq:clique-violation}
\end{equation}
exceeds $\eps^{\mathrm{sep}}$, where $C$ contains the moving arcs in that inequality. The separator uses three modes: Root at the root node, Integer at nonroot nodes with integral moving decisions, and F16 at selected nonroot nodes with fractional moving decisions. F16 applies separation with frequency $F=16$ for every 16th eligible fractional node. Algorithm~\ref{alg:support-slot-rows} summarizes one separation call. Its three stages, conflict search, clique construction, and cut selection, are described below.

\begin{algorithm}[t]
\small
\caption{Separation}
\label{alg:support-slot-rows}
\label{alg:slot-separation}
\begin{algorithmic}[1]
\Require Node LP solution $\bar x$, active inequalities $\mathcal R$,
and mode $\kappa\in\{\mathrm{Root},\mathrm{Integer},\mathrm{F16}\}$
\Ensure Additions and dominated removals $\Delta$ to $\mathcal R$
\Statex \textbf{Presets:}
\[
  (C_\kappa,B_\kappa)=
  \begin{cases}
    (100000,\infty), & \kappa=\mathrm{Root},\\
    (1000,\infty),   & \kappa=\mathrm{Integer},\\
    (2000,80),       & \kappa=\mathrm{F16},
  \end{cases}
  \qquad
  \eps^+=\eps^{\mathrm{sep}}=10^{-7}.
\]
\Function{SupportSlotRows}{$\bar x,\mathcal R,\kappa$}
  \Statex \textbf{Conflict search}
  \State $\mathcal A^+\gets\{a\in\Am:\bar x_a>\eps^+\}$
  \State $\mathcal S^+\gets\{\sigma(a):a\in\mathcal A^+\}$
  \State $H\gets\Call{ConflictSearch}{\mathcal S^+,C_\kappa}$
  \Statex \textbf{Clique construction}
  \State $\Gamma\gets\Call{CliqueConstruction}{\mathcal A^+,H,\bar x}$
    \Comment{constructions I--III}
  \Statex \textbf{Cut selection}
  \State $\Gamma\gets\{C\in\Gamma:v(C)>\eps^{\mathrm{sep}}\}$
  \State Remove duplicate or previously represented candidates from $\Gamma$
  \If{$B_\kappa<\infty$}
    \State Retain at most $B_\kappa$ candidates with largest $v(C)$
  \EndIf
  \State Apply support dominance to $\Gamma$ and $\mathcal R$;
    record additions and removals in $\Delta$
  \State \Return $\Delta$
\EndFunction
\end{algorithmic}
\end{algorithm}

\paragraph{Conflict search}
The search begins with positive-valued moving arcs and groups them by slot, thereby avoiding repeated geometric comparisons for arcs with different request labels but identical trajectories. Pairs at different altitudes, with disjoint cruise intervals, or with sufficiently separated bounding boxes are discarded. The remaining pairs are checked using the exact collision condition, with previous results reused for repeated trajectory comparisons. The graph $H$ contains all positive slots and the confirmed conflict edges. The search terminates once $C_\kappa$ conflicts between distinct slots have been found. This cap counts conflicts rather than pair comparisons and may leave additional conflicts undiscovered.

\paragraph{Clique construction}
For construction~I, a greedy search starts from an uncovered edge of $H$ and repeatedly adds a common neighbour of all current clique members. Among eligible vertices, it favours the one with the most neighbours in the remaining candidate set. When no extension is possible, the clique's edges are marked as covered, and another uncovered edge is selected. Each positive slot is also tested on its own using \eqref{eq:same-slot-clique}. Then, each selected slot clique is expanded to all moving arcs in its slots.

Construction~II reuses the slot conflicts recorded in $H$, so building its graph requires no additional geometric collision checks. Its vertices are positive-valued moving arcs. Two arcs are connected if they occupy the same slot, occupy slots joined by an edge of $H$, or serve the same request. The same greedy procedure is then applied to this augmented graph.

Construction~III considers each pair $(a,b)$ of positive-valued moving arcs that occupy the same slot or two slots identified as physically conflicting. If $b=(q,\rho_b,\tau_b)$ is a service arc and $a\notin\As_q$, then $b$ supplies the target request $q$ and route $\rho_b$. The outside arc $a$ serves another request or is a rebalancing arc. Keeping $a$, $q$, and $\rho_b$ fixed, all feasible departure alternatives of request $q$ on $\rho_b$ are checked for physical conflicts with $a$. This gives $L(a,q,\rho_b)$ and the candidate inequality~\eqref{eq:route-lift}. When both arcs serve different requests, both orientations are considered, with each arc acting in turn as the outside arc. The additional tests required to form a lift are separate from the limit $C_\kappa$ on the slot search.

The inequalities obtained from constructions~I and~III include all arcs specified by their definitions, including those with zero current value. The test in \eqref{eq:clique-violation} is then applied to each candidate. These searches may not identify all violated clique inequalities. However, each generated inequality is valid because no two arcs in the corresponding clique can be selected together.

\paragraph{Cut selection}
The separator discards repeated inequalities already selected or replaced by stronger ones. When $B_\kappa$ is finite, the remaining candidates are ranked by violation and at most $B_\kappa$ are selected across the three constructions. Candidates omitted because they are unviolated or exceed this budget can be reconsidered at a later LP solution.

Dominated inequalities are removed when the selected cuts are added.
For $C_1\subsetneq C_2$, nonnegativity gives
\begin{equation}
  \sum_{a\in C_2}x_a\le1
  \quad\Longrightarrow\quad
  \sum_{a\in C_1}x_a\le1.
  \label{eq:row-dominance}
\end{equation}
Thus, a candidate contained in an active inequality is discarded, while a newly found inequality can replace weaker ones whose variable sets it contains. These comparisons apply across all three constructions and may reduce the number of cuts actually added.

\subsection{Branching and search management}
\label{subsec:branching}

\paragraph{Node selection}
Before an incumbent is available, we explore nodes by a depth-first dive. We then order open nodes by $(\underline z_N,\operatorname{depth}(N),\operatorname{sequence}(N))$, where $N$ is a branch-and-cut node. We prefer lower $\underline z_N$. Among nodes with the same $\underline z_N$, we select the shallowest node. If several nodes have the same depth, we select the one that entered the queue first.

\paragraph{Branching}
We apply the following three branching rules in order. For request $q$, let $\As_q(N)$ be its service alternatives still allowed at node $N$. A generalized upper-bound (GUB) branch divides this set into a nonempty group $L$ and its complement. The two children impose
\begin{equation}
  \sum_{a\in L}x_a=1
  \qquad\text{or}\qquad
  \sum_{a\in L}x_a=0.
  \label{eq:gub-branch}
\end{equation}

\begin{enumerate}
\item \textbf{Congestion-based request GUB.}
Let $C$ be the moving-arc set of an active clique inequality. For each request $q$, set $L=C\cap\As_q(N)$. Define
\begin{equation}
  m=\sum_{a\in L}x_{a},\qquad
  o=\sum_{a\in C\setminus\As_q}x_{N,a},\qquad
  \operatorname{score}(C,q)=\min\{m,1-m\}\,o.
  \label{eq:congestion-score}
\end{equation}
We select the clique $C$ and request $q$ with the highest score, subject to $0<m<1$ and $o>0$. The score favours splits with balanced LP weights, measured by \(\min\{m,1-m\}\), and substantial LP weight on conflicting arcs outside request \(q\), measured by \(o\).

\item \textbf{Balanced request GUB.}
If no congestion-based split is available, consider the set $\mathcal Q_f(N)$ of requests whose service decisions remain fractional. For each $q\in\mathcal Q_f(N)$, process its positive-valued alternatives in decreasing LP value, assigning each to the lighter of two initially empty groups, where weight is the sum of LP values. Denote the resulting groups by $B_{q,1}$ and $B_{q,2}$, and define
\begin{equation}
  L_q=B_{q,1},
  \qquad
  m_q=\sum_{a\in L_q}x_{a}.
  \label{eq:balanced-gub-partition}
\end{equation}
All allowed zero-valued alternatives remain in the complement: $  \As_q(N)\setminus L_q=B_{q,2}\cup\{a\in\As_q(N):x_{a}=0\}$.
Among these candidate partitions, select
\begin{equation}
  q^\star\in
  \arg\max_{q\in\mathcal Q_f(N)}
  \min\{m_q,1-m_q\},
  \qquad
  L=L_{q^\star}.
  \label{eq:balanced-gub-choice}
\end{equation}

\item \textbf{Rebalancing branching K1.}
If all service decisions are integral, we choose the fractional
rebalancing variable closest to $1/2$ and create the two children
\begin{equation}
  x_a=0\qquad\text{or}\qquad x_a=1,
  \quad a\in\Ar.
  \label{eq:rebal-branch}
\end{equation}
\end{enumerate}

\paragraph{Primal heuristic}
When the search tree finds a feasible integer solution, the primal heuristic improves it by adjusting departure times, routes, cruising altitudes, and rebalancing flights while strictly preserving each drone’s request sequence in the planning horizon.

Algorithm~\ref{alg:explicit-lp-bc} applies branch-and-cut to the model \eqref{model:exact}, adding collision inequalities as needed. Let $N_0$ be the root, $\mathcal S$ the open-node set, $\mathcal R$ the active global inequalities, and $(\hat x,\bar z)$ the best verified incumbent and its objective value. We use $P=24$ as the maximum number of separation iterations for the root node, nonroot separation frequency $F=16$, wall-time limit $W$, and the tolerances defined above. The counter $r$ counts root separation iterations. The counter $\nu$ counts nonroot nodes when they first have an unpruned fractional solution. For node $N$, $u_N$ indicates whether its scheduled fractional separation F16 has been used. The sets $\Delta^+$ and $\Delta^-$ contain added and removed inequalities.

\begin{algorithm}[H]
\small
\raggedright
\algrenewcommand\algorithmicthen{}
\algtext*{EndIf}
\caption{Branch-and-cut: main search procedure}
\label{alg:explicit-lp-bc}
\begin{algorithmic}[1]
\State Initialize $\mathcal S\gets\{N_0\}$,
$\underline z_{N_0}\gets-\infty$, $\bar z\gets+\infty$,
$\hat x\gets\varnothing$, and $\nu\gets0$
\While{$\mathcal S\ne\varnothing$ and elapsed time $<W$}
  \State Pop $N$ by depth-first dive before the first incumbent; Otherwise by $(\underline z_N,\operatorname{depth}(N),\operatorname{sequence}(N))$
  \State Set $r\gets0$, and $u_N\gets\mathrm{false}$;
Solve the node LP and, if feasible, obtain $(z_N,x_N)$
  \Loop
    \If{the LP is infeasible} or {$z_N\ge\bar z-\eps^{\mathrm{obj}}$}
      \State Fathom $N$; \textbf{break}
    \EndIf
    \If{$N$ is the root}
      \State $r\gets r+1$
    \EndIf
    \If{$N$ is nonroot, $x_N$ is fractional, and $\neg u_N$}
      \State $\nu\gets\nu+1$;
    \EndIf
    \State $\displaystyle\kappa\gets
      \begin{cases}
        \mathrm{Root}, & N\text{ root},\quad (x_N\text{ integral}\;\lor\;r \leq P),\\
        \mathrm{Integer}, & N\text{ nonroot},\quad x_N\text{ integral},\\
        \mathrm{F16}, & N\text{ nonroot},\;x_N\text{ fractional},\; \neg u_N,\;
(\nu-1)\bmod F=0,\\
        \varnothing, & \text{otherwise}.
      \end{cases}$
    \If{$\kappa=\varnothing$}
      \State \textbf{break}
    \EndIf
    \If{$\kappa=\mathrm{F16}$}
      \State $u_N\gets\mathrm{true}$
    \EndIf
    \State $\Delta\gets\Call{Algorithm 1: Separation}{x_N,\mathcal R,\kappa}$
    \If{$\Delta^+=\varnothing$}
      \State \textbf{break}
      \Comment{the current solution is closed under mode $\kappa$}
    \EndIf
    \State $\mathcal R\gets(\mathcal R\setminus\Delta^-)\cup\Delta^+$

    \State Reoptimize the node LP to obtain $(z_N,x_N)$
    
  \EndLoop
  \If{$N$ was fathomed}
  \State \textbf{continue} to the next open node
  \EndIf
    \If{$x_N$ is integral and feasible}
  \State try to improve $x_N$ by primal heuristic
  \State save it as a projected incumbent if $z_N<\bar z$
\Else
  \State branch by congestion GUB, balanced request GUB,
  or rebalancing K1
\EndIf
\EndWhile
\end{algorithmic}
\end{algorithm}

\section{Experiments}
\label{sec:experiments}
Section \ref{sub: data preparation} displays the common settings in the following two experiments. Section \ref{subsec: computational results} conducts the numerical experiment that compares the proposed algorithm with Gurobi's native lazy-constraint functionality. Section \ref{subsec: operational experiments} conducts the operational experiment that investigates the influence of multiple route candidates per vertiport pair on noise exposure, operation cost, and collisions.

\subsection{Data and network preparation}
\label{sub: data preparation}

\begin{figure}[h!]
\centering
\includegraphics[width=\linewidth,height=\textheight,keepaspectratio]{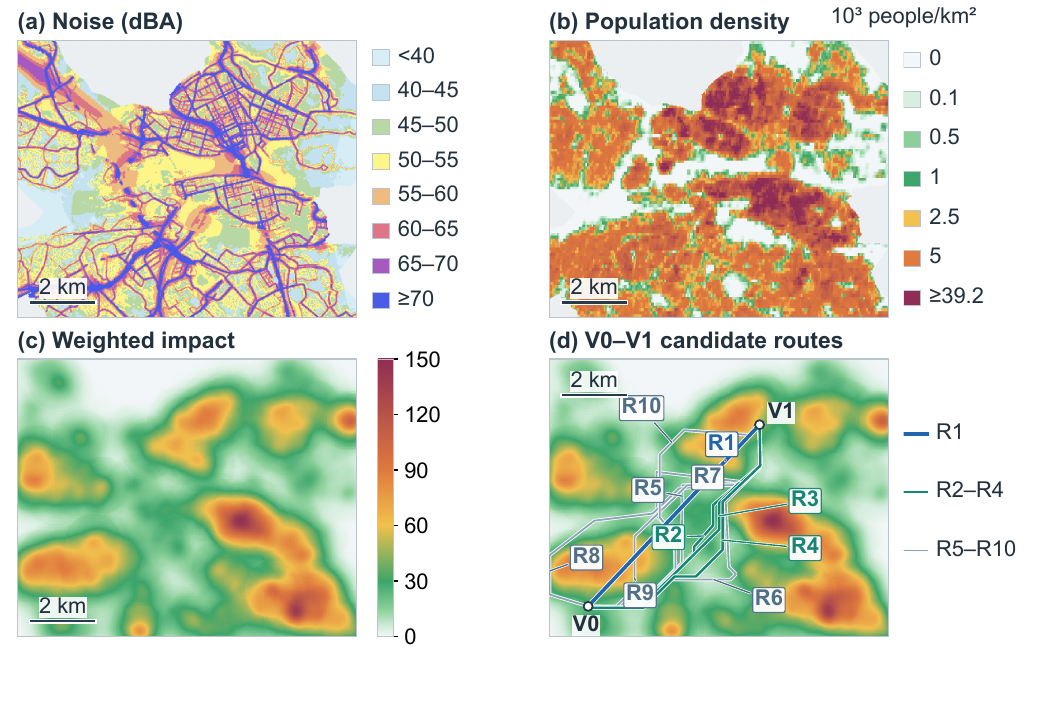}
\caption{Spatial data inputs and nested candidate planar routes.}
\label{fig:raster-data-route-generation}
\end{figure}

The networks are constructed using the European Commission Joint Research Centre's GHS-POP R2023A population data for 2020 (WGS84, 3 arc-second resolution) \citep{ghspop,ghsl2024} and the City of Stockholm Environmental Administration's \textit{Bullerkartan 2022} \citep{stockholmnoise}. The latter provides modeled 24-hour equivalent noise levels from road, rail, and aircraft traffic. During preprocessing, the noise polygons are rasterized onto a 25~m grid, as shown in Fig.~\ref{fig:raster-data-route-generation}(a). Then, population counts are sampled on the same grid and converted to population densities, as shown in Fig.~\ref{fig:raster-data-route-generation}(b). These two inputs are combined through the noise-impact model in \ref{app:noise-cost}, specifically
Eq.~\eqref{eq:noise-population-impact}, to obtain the population-weighted impact raster in Fig.~\ref{fig:raster-data-route-generation}(c).

Vertiports are first generated and should be located at least 1.6~km apart. Candidate planar routes are generated offline for each unordered vertiport pair based on the 25~m weighted impact. Edge weights combine flight distance with population-weighted noise exposure evaluated at a reference altitude of 300~m. The generator considers a direct connection where noise-data coverage permits, a minimum-cost path, and alternative paths through laterally displaced intermediate nodes. Route lengths are limited to 2.4 times the straight-line distance between the endpoints, and an initial edge-overlap filter encourages spatial diversity. Candidates are simplified into polylines with at most eight straight-line segments using deviation tolerances of up to 350~m. At the same time, the retained geometries must be within the available noise-data area.

Fig.~\ref{fig:raster-data-route-generation}(d) displays an example of generating ten candidate routes between two vertiports $v_0$ and $v_1$ on the same impact raster. Let $K$ denote the number of retained planar routes per unordered vertiport pair. Labels R1--R10 indicate the fixed order of addition: R1 is always the direct route between two vertiports at $K=1$, and R2--R10 are added subsequently. Increasing $K$ thus provides additional spatial alternatives while retaining all previously available routes. Each retained geometry is evaluated at both 300 and 500~m to obtain its altitude-specific exposure and cost and is made available in both directions. Expansion by direction and altitude therefore yields $2K$ route--altitude options per directed OD pair. The cost of each moving arc in the objective function \eqref{eq:objective} is illustrated in \ref{app:noise-cost}.

Demand is generated from a population-based gravity distribution. There are 60 or 100 transportation requests, and request release times are drawn uniformly from the integers $0,\ldots,45$. The planning horizon comprises 90 time intervals, with time points $0,\ldots,90$. All requests must be served within the planning horizon. Each vertiport initially holds two or three aircraft and must recover the same inventory by the end of the planning horizon.

\subsection{Numerical results}
\label{subsec: computational results}
Explicitly including all pairwise collision constraints creates a substantial computational burden. In preliminary tests, this direct implementation exceeded the available memory on the test machine. Therefore, we compare the proposed solution approach, named B123, against the Gurobi solver with a lazy-constraint mechanism on the same 40 instances, denoted by Lazy. Both methods generate collision constraints on demand and are evaluated on the same 40 instances.

\subsubsection{Experimental settings}
In generating the 40 instances, we consider the following four factors: the number of requests $R\in\{60,100\}$, the number of vertiports $D\in\{8,10\}$, the number of vehicles initially available at each vertiport $F\in\{2,3\}$, and the number of planar route alternatives per vertiport pair $K\in\{4,8\}$. The total fleet size is $V=DF$. In the order $(R,D,V,K)$, the eight configurations are $(60,8,16,4)$, $(60,8,24,8)$, $(60,10,20,8)$, $(60,10,30,4)$, $(100,8,16,8)$, $(100,8,24,4)$, $(100,10,20,4)$, and $(100,10,30,8)$. Each configuration has five different instances. Each instance regenerates vertiports and requests. The instance name \texttt{R60\_D8\_V16\_K4\_1}, for example, represents the first replicate with the configuration of 60 requests, 8 vertiports, 16 vehicles, and 4 planar route alternatives per vertiport pair. All instances require a $\Rsep =180\ \text{m}$ safety distance during cruise stages for all flights.

The implementations use Python 3.11.4 and Gurobi 11.0.1 on a Windows laptop with an Intel Core i9-12900H CPU and 32 GB of RAM. Runs are executed sequentially, with a nominal time limit of 3,600 s. Each run starts with no externally supplied initial solution and a separate empty conflict cache, which is populated on demand. Note that B123 processes its custom branch-and-cut tree sequentially in Python and solves the node LP relaxations using Gurobi's single-threaded dual simplex algorithm. Lazy uses Gurobi's native parallel MIP solver, including its built-in presolve, cutting planes, and primal heuristics; the solver logs report the use of 20 threads in all 40 runs. The runtime comparison therefore reflects both the underlying search strategies and differences in implementation and parallelism between our B123 and Gurobi's Lazy.

\subsubsection{Comparison with native MIP search and lazy collision constraints}
\label{sec:comparison-t90}

We compare B123 with Lazy, which uses Gurobi's native MIP search with the native lazy-constraint function for pairwise collision constraints \eqref{eq:pairwise-collision}. Table~\ref{tab:t90-summary} summarizes the results, and Table~\ref{tab:t90-full-results} reports all paired observations.

\begin{table}[h!]
\centering
\caption{Comparison of B123 and Lazy by instance configuration.}
\label{tab:t90-summary}
\begingroup\fontsize{8.5}{10.2}\selectfont\setlength{\tabcolsep}{3pt}
\textbf{(a) Feasibility, optimality, and bounds}\par\smallskip
\begin{tabular}{@{}clrrrrrrrrrr@{}}
\toprule
& & \multicolumn{2}{c}{Feasible} & \multicolumn{2}{c}{Optimal} & \multicolumn{3}{c}{Higher LB} & \multicolumn{3}{c}{Lower UB} \\
\cmidrule(lr){3-4}\cmidrule(lr){5-6}\cmidrule(lr){7-9}\cmidrule(l){10-12}
Case & $(R,D,V,K)$ & B123 & Lazy & B123 & Lazy & B123 & Equal & Lazy & B123 & Equal & Lazy \\
\midrule
1 & (60,8,16,4) & 5/5 & 5/5 & 5/5 & 5/5 & 0 & 5 & 0 & 0 & 5 & 0 \\
2 & (60,8,24,8) & 5/5 & 5/5 & 4/5 & 4/5 & \textbf{1} & 4 & 0 & \textbf{1} & 4 & 0 \\
3 & (60,10,20,8) & 5/5 & 5/5 & 5/5 & 5/5 & 0 & 5 & 0 & 0 & 5 & 0 \\
4 & (60,10,30,4) & 5/5 & 5/5 & 5/5 & 5/5 & 0 & 5 & 0 & 0 & 5 & 0 \\
5 & (100,8,16,8) & \textbf{5/5} & 0/5 & 0/5 & 0/5 & \textbf{5} & 0 & 0 & -- & -- & -- \\
6 & (100,8,24,4) & \textbf{5/5} & 0/5 & 0/5 & 0/5 & \textbf{5} & 0 & 0 & -- & -- & -- \\
7 & (100,10,20,4) & \textbf{5/5} & 2/5 & 1/5 & \textbf{2/5} & \textbf{3} & 0 & 2 & 0 & 1 & \textbf{1} \\
8 & (100,10,30,8) & \textbf{5/5} & 0/5 & 0/5 & 0/5 & \textbf{5} & 0 & 0 & -- & -- & -- \\
\midrule
All &  & \textbf{40/40} & 22/40 & 20/40 & \textbf{21/40} & \textbf{19} & 19 & 2 & 1 & 20 & 1 \\
\bottomrule
\end{tabular}\par\medskip
\textbf{(b) Average computation time and search counters}\par\smallskip
\begin{tabular}{@{}clrrrrrr@{}}
\toprule
& & \multicolumn{2}{c}{Average time (s)} & \multicolumn{2}{c}{Average cuts} & \multicolumn{2}{c}{Average nodes} \\
\cmidrule(lr){3-4}\cmidrule(lr){5-6}\cmidrule(l){7-8}
Case & $(R,D,V,K)$ & B123 & Lazy & B123 & Lazy & B123 & Lazy \\
\midrule
1 & (60,8,16,4) & 458.2 & \textbf{97.0} & \textbf{1,869.2} & 2,610.4 & \textbf{1,019.2} & 20,576.2 \\
2 & (60,8,24,8) & 1,966.2 & \textbf{963.8} & \textbf{3,409.4} & 4,952.6 & \textbf{3,053.6} & 52,311.0 \\
3 & (60,10,20,8) & 664.8 & \textbf{178.8} & \textbf{2,034.4} & 2,610.2 & \textbf{956.8} & 12,776.4 \\
4 & (60,10,30,4) & 66.6 & \textbf{40.2} & \textbf{1,041.4} & 1,223.2 & \textbf{215.6} & 6,847.4 \\
5 & (100,8,16,8) & 3,600.3 & 3,606.2 & \textbf{5,419.0} & 10,185.0 & \textbf{862.0} & 56,159.8 \\
6 & (100,8,24,4) & 3,600.3 & 3,604.2 & \textbf{7,771.0} & 9,537.4 & \textbf{1,367.4} & 102,652.4 \\
7 & (100,10,20,4) & 3,502.6 & \textbf{2,979.4} & \textbf{5,611.8} & 6,761.0 & \textbf{1,431.4} & 75,339.2 \\
8 & (100,10,30,8) & 3,600.2 & 3,606.7 & \textbf{6,768.8} & 9,261.0 & \textbf{941.4} & 98,327.8 \\
\midrule
All &  & 2,182.4 & \textbf{1,884.5} & \textbf{4,240.6} & 5,892.6 & \textbf{1,230.9} & 53,123.8 \\
\bottomrule
\end{tabular}\par\endgroup
\end{table}

In Table \ref{tab:t90-summary}, each configuration contains five paired instances. Higher final LB and Lower final UB count B123 wins, ties (Equal), and Lazy wins. Bold in panel~(a) marks larger feasible/optimal counts or bound-win counts between methods. Bold in panel~(b) marks lower average computation time/cuts/nodes.

In Table~\ref{tab:t90-summary}(a), the main difference is feasible-solution coverage: B123 finds schedules for all 40 instances, whereas Lazy does so for 22. All 18 additional feasible schedules occur in the tested $R=100$ configurations (Cases~5--8), suggesting that B123 has a better ability to find an incumbent. At the same time, B123 obtains higher final LBs on 19 instances, equal LBs on 19, and lower LBs on two, showing generally better performance on improving the LBs.

Table~\ref{tab:t90-summary}(b) shows that B123 has lower mean cut and node counts in every configuration. Across all 40 instances, the mean recorded node counts are 1,230.9 for B123 and 53,123.8 for Lazy. Thus, Lazy processes approximately 43.16 times as many nodes as B123, with the ratio calculated from the unrounded counts. The mean recorded search times are 2,182.4~s for B123 and 1,884.5~s for Lazy. These averages include all 40 runs per method, including time-limited runs, runs without an incumbent, and recorded overruns. As discussed in the experimental settings, B123 manages its custom branch-and-cut tree sequentially in Python, whereas Lazy uses Gurobi's native parallel MIP search with 20 threads. The longer mean runtime of B123 should be interpreted together with the factor of Lazy's parallel search and other Gurobi's native acceleration techniques.

Overall, on the tested configurations, B123 provides complete feasible-solution coverage and generally higher lower bounds while processing substantially fewer search nodes. These combined results demonstrate the effectiveness of the proposed framework in obtaining feasible schedules and strong bounds with limited node exploration. Lazy proves optimality faster on the instances solved to optimality by both methods, indicating scope for improving the computational speed of the B123 implementation.

\subsection{Route flexibility, noise exposure and cruise separation}
\label{subsec: operational experiments}

\begin{figure}[h!]
\centering
\includegraphics[width=\linewidth,height=0.65\textheight,keepaspectratio]{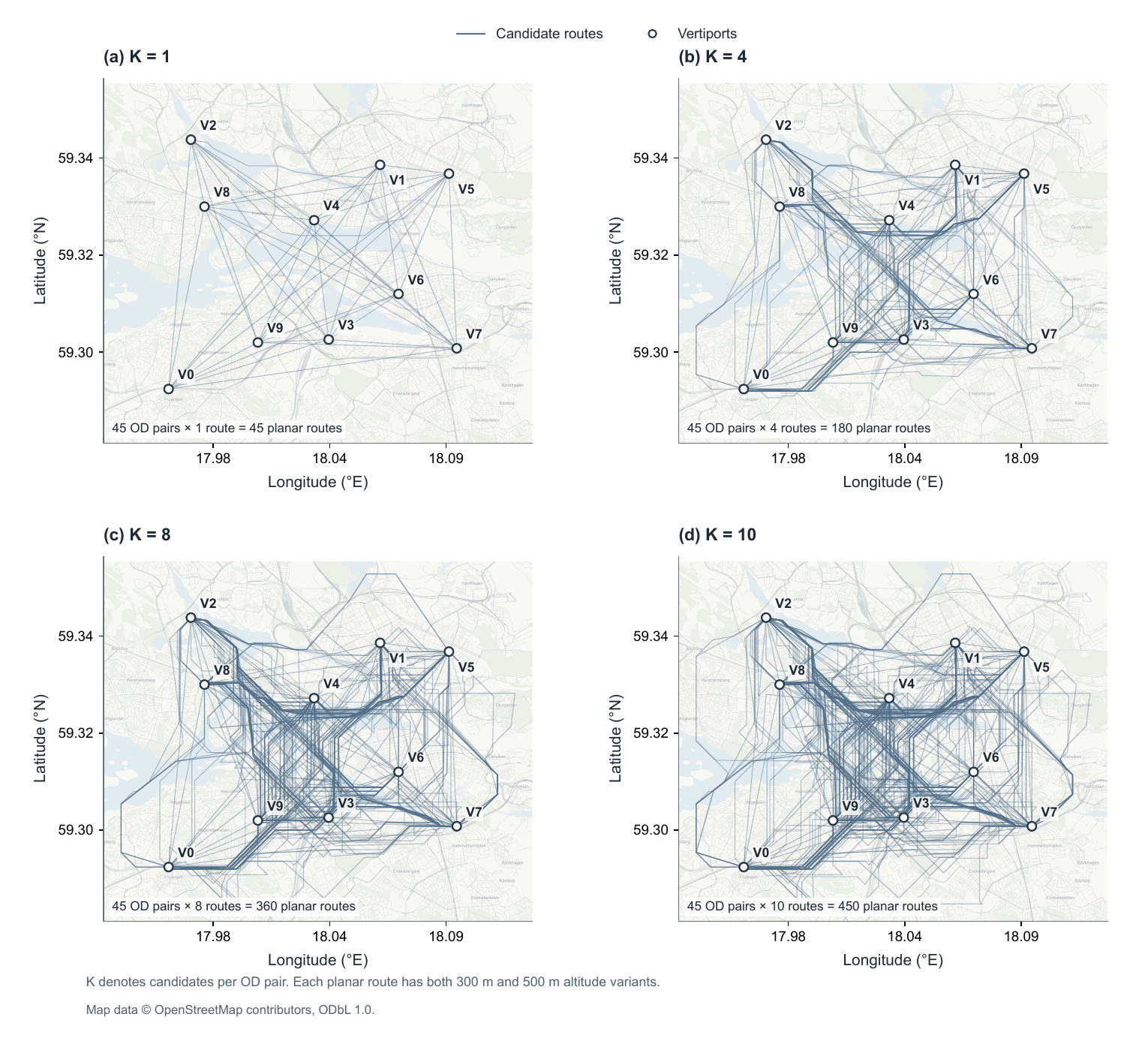}
\caption{Candidate route networks for $K=1,4,8,10$.}
\label{fig:m1m0-network}
\end{figure}

Additional routes give aircraft more ways to reduce noise exposure, but these alternatives must also fit into a feasible fleet schedule. This experiment examines how much route flexibility improves the noise exposure and operating cost, and how that benefit changes when cruise separation is enforced.

\subsubsection{Experimental design}
We compare two models on one of the largest configurations \texttt{R100\_D10\_V30\_K8\_1} in Section \ref{subsec: computational results}: M1 from model \eqref{model:exact}, whereas M0 relaxes the cruise-separation constraints of 180m safety distance \eqref{eq:pairwise-collision}. M0 therefore provides a lower-cost relaxation against which to measure the effect of this requirement.

We vary the number of candidate planar routes per vertiport pair, $K$, from 1 to 10. Fig.~\ref{fig:m1m0-network} shows the candidate networks at $K=1,4,8,10$. Across the 45 unordered vertiport pairs, these networks contain 45, 180, 360, and 450 planar routes, respectively. The catalogue is constructed as a fixed nested sequence. At $K=1$, each pair retains the shortest route: a straight route for 43 pairs and a bent route for the remaining two due to the restriction of the feasible area. The other candidates are added in increasing order of length. Therefore, a feasible schedule at one value of $K$ remains available at every larger value. The comparison measures the value of this particular sequence of alternatives, whose geometry changes along with the size of the catalogue. 

We therefore consider two models, M0 and M1, and ten route catalogues with $K=1,\ldots,10$ candidate planar routes per OD. The 20 model--catalogue combinations are evaluated using Algorithm \ref{alg:explicit-lp-bc}. A validated solution from a smaller catalogue can initialize the same model at a larger $K$. All twenty configurations are solved to optimality with the help of an initial feasible solution and a global warm cache of collisions across all runs.

\subsubsection{Value of additional route candidates}

\begin{figure}[H]
\centering
\includegraphics[width=\linewidth,height=0.4\textheight,keepaspectratio]{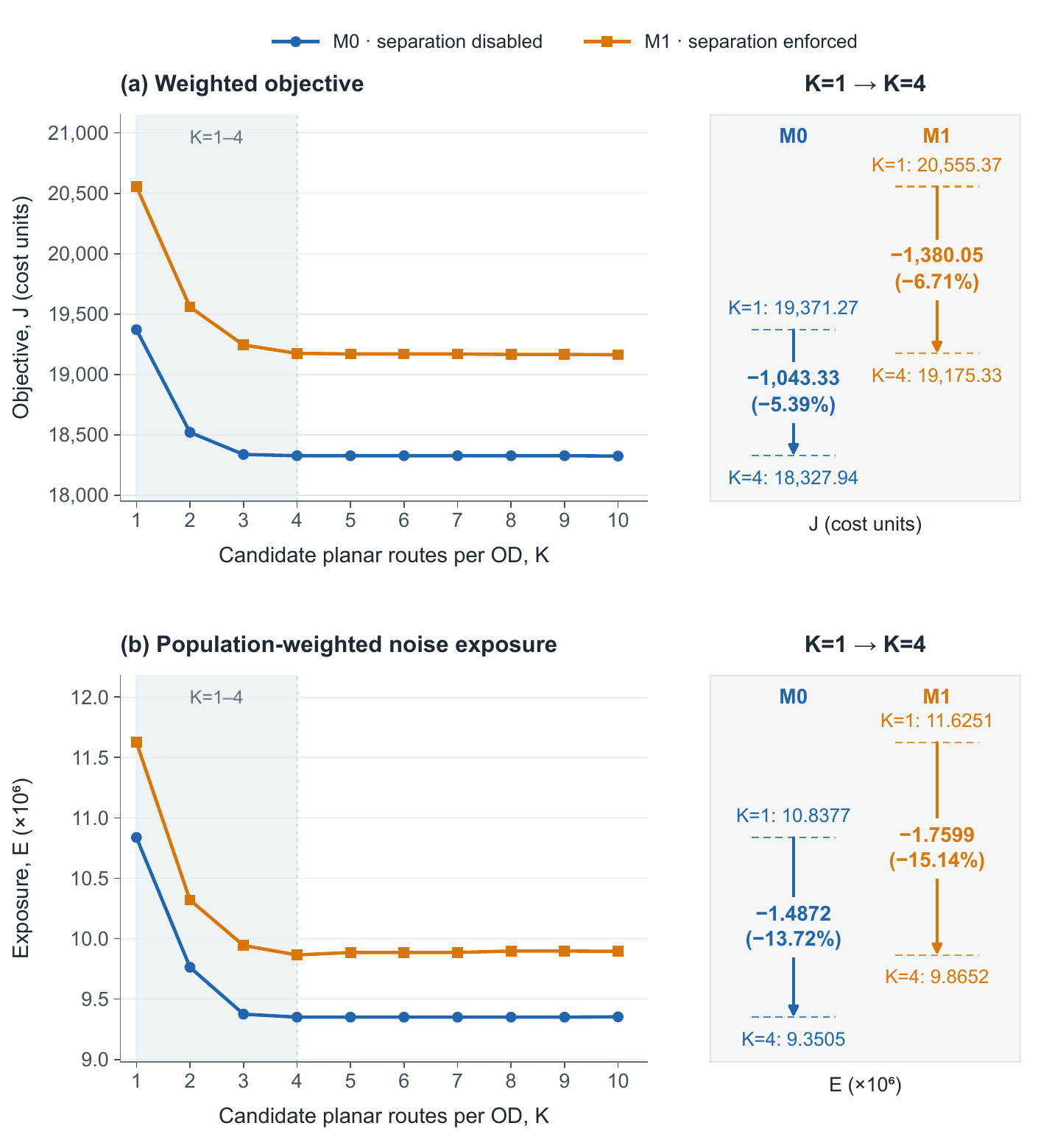}
\caption{Weighted objective and population-weighted noise exposure as the route candidates expand.}
\label{fig:m1m0-objective}
\end{figure}

Fig.~\ref{fig:m1m0-objective} shows the objective value and noise exposure in panels (a) and (b), respectively, as the number of candidate planar routes per vertiport pair, $K$, increases. Panel (a) shows that the main gains in the objective occur between $K=1$ and $K=4$. Under M1, the objective falls 6.71\%, while exposure falls 15.14\%. M0 follows the same broad pattern, with reductions of 5.39\% in the objective and 13.72\% in exposure. The larger reductions under M1 suggest that additional routes are more useful when flights must also be separated. Beyond four routes, the objective curves are nearly flat, showing that almost all of the objective reduction is obtained through the first few additions. Panel (b) shows a similar concentration of exposure reductions at low values of $K$.

\begin{figure}[H]
\centering
\includegraphics[width=\linewidth,height=0.68\textheight,keepaspectratio]{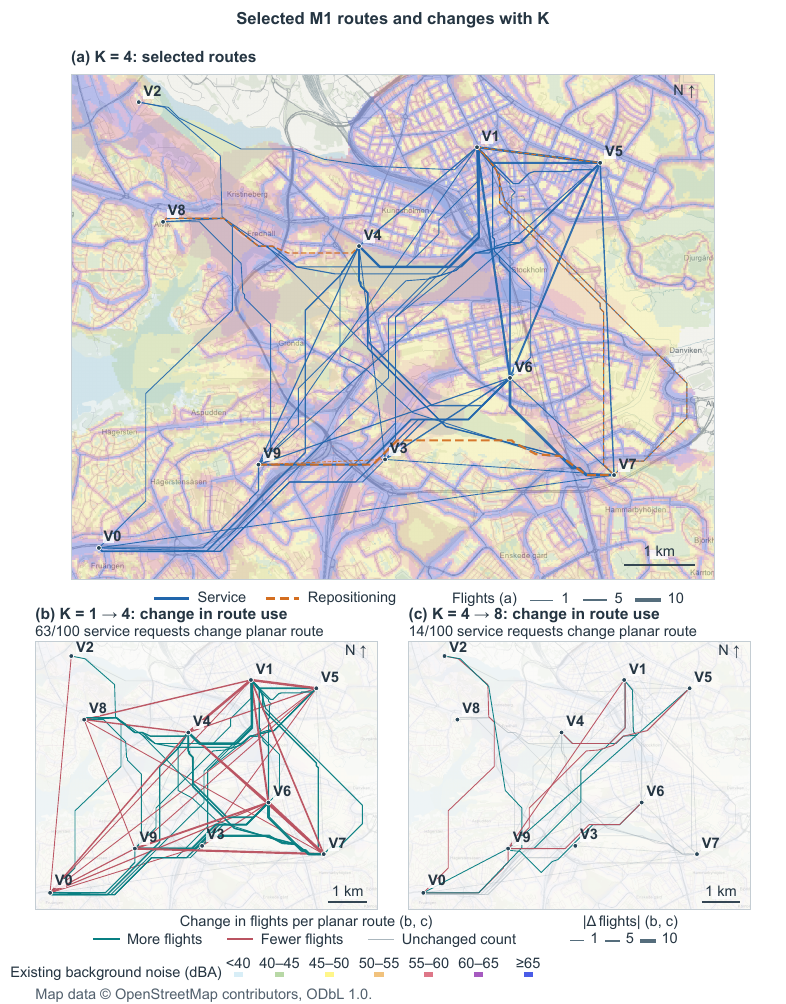}
\caption{Selected M1 routes and changes in route use. $K=1$: 100 service + 16 rebalancing flights; 33 distinct planar routes; 412.51~km total planar distance. $K=4$: 100 service + 15 rebalancing flights; 46 distinct planar routes; 457.54~km total planar distance. $K=8$: 100 service + 15 rebalancing flights; 48 distinct planar routes; 459.87~km total planar distance.}
\label{fig:m1m0-routes}
\end{figure}

The spatial changes in Fig.~\ref{fig:m1m0-routes} help explain why the early expansion matters more. Panel (a) shows the M1 schedule at $K=4$: 100 service flights and 15 repositioning flights use 46 distinct planar routes, compared with 33 at $K=1$. Between these two schedules, 63 of the 100 requests change planar route. The widespread increases and decreases in route use in panel (b) correspond to this redistribution. Total planar distance across service and repositioning flights rises from 412.51 to 457.54 km, an increase of 10.92\%. Thus, the early benefit comes with substantial rerouting and additional distance. The subsequent expansion produces much smaller changes. From $K=4$ to $K=8$, only 14 service requests change planar route, and the number of used planar routes increases from 46 to 48. Total planar distance increases from 457.54 to 459.87 km by a further 0.51\%. Panel (c) therefore depicts local adjustments around a largely retained route pattern, consistent with the small additional objective gains.

\begin{figure}[h!]
\centering
\includegraphics[width=0.6\linewidth,height=0.76\textheight,keepaspectratio]{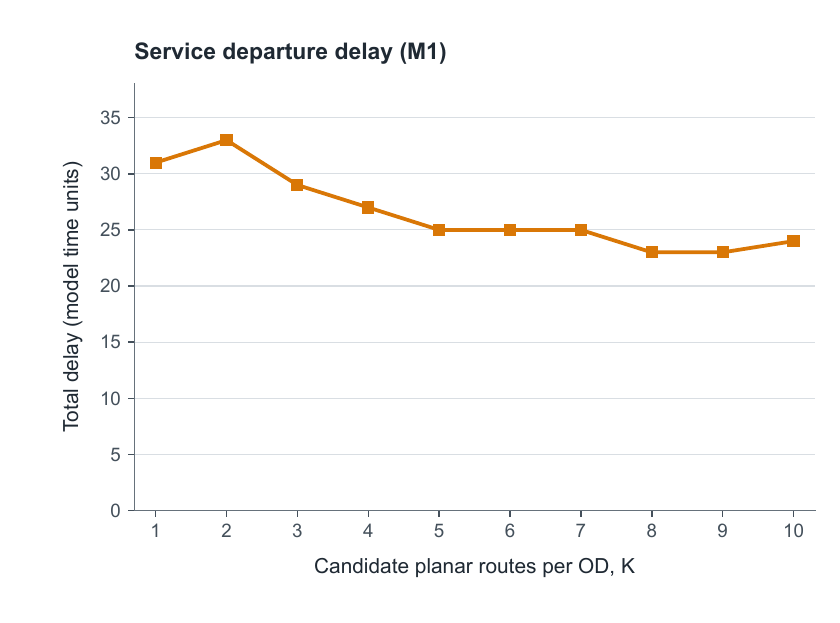}
\caption{Service departure delay in the reported M1 schedules.}
\label{fig:m1m0-delay}
\end{figure}

Fig.~\ref{fig:m1m0-delay} reports total service departure delay in the M1 schedules, calculated as the sum of the release-to-departure times across all 100 requests. The lowest observed delay is 23 model time units at $K=8$ and $K=9$, compared with 31 at $K=1$, representing a 25.81\% reduction. Half of this net reduction is observed between $K=4$ and $K=8$, where delay decreases from 27 to 23, indicating that improvements in service timing extend beyond the initial catalogue expansion. The reported $K=10$ schedule has a slightly higher total delay of 24 model time units.

\begin{figure}[h!]
    \centering
    \includegraphics[width=\textwidth]
    {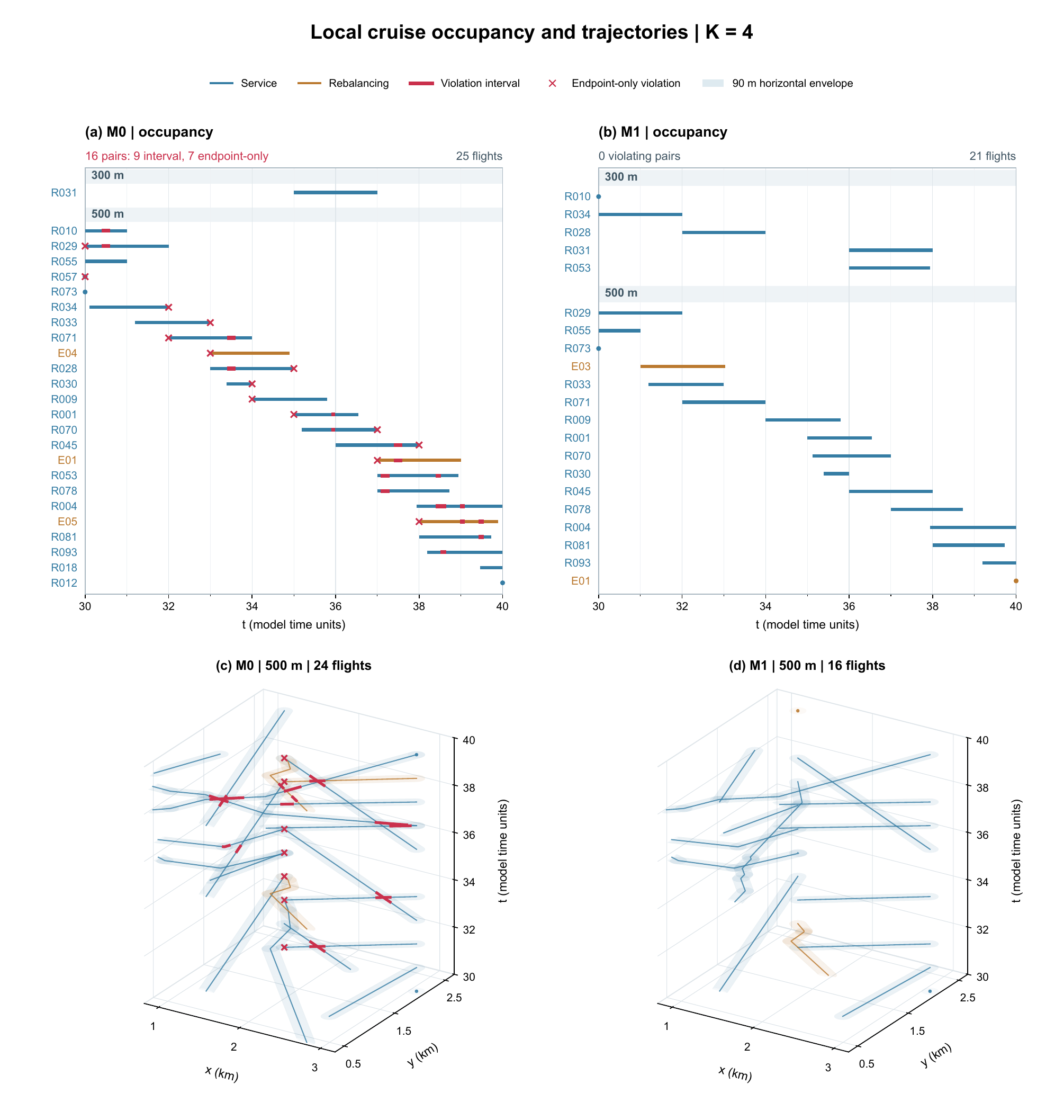}
    \caption{Local cruise occupancy and separation status in the reported M0 and M1 schedules at $K=4$.}
    \label{fig:k4-occupancy-trajectories}
\end{figure}

Fig.~\ref{fig:k4-occupancy-trajectories} gives an example of collision avoidance by comparing local cruise operations in the reported M0 and M1 schedules at $K=4$. Both schedules are examined in the same spatial region over $t\in[30,40]$ for illustrative purposes. Panels (a) and (b) show the cruise occupancy of all flights passing through the displayed neighborhood, grouped by altitude. Continuous trajectory checks identify 16 local violating pairs under M0, including nine with violations over positive-duration intervals and seven confined to a shared cruise endpoint, whereas no local violations are found under M1. Panels (c) and (d) show the corresponding 500~m space--time trajectories, with time on the vertical axis and 90~m horizontal envelopes used to visualize the 180~m pairwise separation requirement. Several M1 occupancy intervals overlap without violating separation, illustrating that concurrent use of the same local airspace is permitted provided that same-altitude aircraft remain sufficiently separated at each instant. Thus, overlapping occupancy intervals or projected trajectory crossings alone do not constitute a separation violation. The full-schedule post-processing check likewise finds no cruise-separation violations in the reported M1 schedule.

\section{Conclusion}
\label{sec: conclusion}
This study developed an integrated routing, scheduling, and rebalancing framework for noise-aware on-demand urban air mobility with multiple candidate flight paths per vertiport pair.
A time-expanded fleet-flow formulation jointly selects paths from these candidate sets for service and repositioning flights, together with cruise altitudes, departure times, and empty repositioning movements. The formulation accounts for population-weighted noise exposure and cruise separation while tracking the availability of a finite aircraft fleet. The proposed branch-and-cut algorithm exploits physical flight conflicts and request exclusivity through three complementary clique constructions. Across 40 Stockholm-based instances, the proposed branch-and-cut algorithm finds feasible solutions for all 40 instances, compared with 22 for Lazy, and achieves stronger lower-bound performance overall with lower mean cut and node counts. These results are obtained with lower mean cut counts and substantially fewer search nodes: B123 processes 1,230.9 nodes on average, compared with 53,123.8 for Lazy, which processes approximately 43.16 times as many nodes.

The operational case study shows that a small set of spatially differentiated routes per vertiport pair can capture most of the objective improvement available from the larger tested catalogue. Under cruise separation, expanding from one to four alternatives substantially reduces population-weighted exposure by 15.14\%, while increasing total planar flight distance from 412.51 to 457.54~km and changing the routes of 63 of the 100 service requests. Lower exposure therefore comes partly through accepting longer flights. Expanding from four to eight alternatives produces much smaller spatial adjustments: only 14 of the 100 requests change route, total planar distance increases from 457.54 to 459.87~km, and population-weighted exposure remains almost the same. Nevertheless, total service departure delay falls from 27 to 23 intervals, showing that additional alternatives retain scheduling value after most objective and exposure gains have been captured. 

Operators can use the model to compare fleet schedules under different candidate route sets and assess the resulting changes in noise exposure, flight distance, passenger departure delay, and empty repositioning. These comparisons show whether adding routes provides meaningful operational benefits and how route choices, departure times, and aircraft movements must be coordinated to obtain those benefits while satisfying cruise-separation requirements. Its present scope comprises known requests, a homogeneous fleet, prescribed routes and altitudes, and discrete departure times. Separation is enforced during cruise at the same altitude, while charging, vertiport capacity, and takeoff and landing conflicts remain outside the model. Noise is represented by an additive exposure indicator. The operational findings concern one Stockholm demand realization and one nested route catalogue, so the number of alternatives needed to capture most benefits remains context-dependent.

Future work should improve computational scalability through more efficient cut-separation and branching strategies, together with the dynamic generation or selection of candidate flight paths. Incorporating battery management, charging, vertiport capacity, and separation during takeoff and landing would broaden the framework's operational applicability. A rolling-horizon extension could accommodate newly arriving requests and uncertain flight durations while updating service schedules and repositioning decisions. Further evaluation across cities, demand patterns, and noise-cost weights would help assess the robustness of the observed operational trade-offs, while aircraft-specific acoustic measurements could refine the exposure model.

\section*{Acknowledgements}
This research was funded by the Swedish Transport Administration (Trafikverket) under grant number TRV 2025/18378, within the project \textit{Bullermiljökonsekvensanalys av UAM i Stockholm (BUS)}. The authors thank the European Commission's Joint Research Centre and the Global Human Settlement Layer (GHSL) team for providing the GHS-POP R2023A gridded resident population dataset, specifically the 2020 population estimates at 3 arc-second resolution in WGS84. The authors also thank Stockholms stad, Milj\"of\"orvaltningen (City of Stockholm Environmental Administration), for providing the \textit{Bullerkartan 2022} maps of 24-hour equivalent background noise levels from road, rail, and aircraft traffic. The authors used Codex (OpenAI; GPT-5.5, GPT-5.6 Sol, and GPT-6 Astra) to assist with drafting and revising the manuscript and developing code to implement the proposed methods and generate synthetic test instances.

\appendix
\section{Construction of noise and flight-route costs}
\label{app:noise-cost}

We use the exponential impact indicator of \citet{BACLET2022103114}, Eqs.~(5) and~(8), and adapt the
route-aggregation approach of \citet{Venkataraman:2023}. Their approach weights linear noise increments by population and travel time; ours integrates exponential, population-normalized impact over flight distance.

\paragraph{Background noise and population}

The Stockholm \textit{Bullerkartan 2022} noise polygons are rasterized onto a $25\,\mathrm{m}$ grid. Each noise class is represented by its midpoint, with $37.5$ and $77.5\,\mathrm{dB(A)}$ assigned to classes below $40$ and above $75\,\mathrm{dB(A)}$, respectively. Where noise polygons overlap, the higher background sound level is assigned to the cell. Cells without background-noise data are excluded from the area available for route generation.

At ground receiver $i$, let $B_i$ denote the background sound level and $\rho_i$ the population density in people per $\mathrm{km}^{2}$. Density is obtained from GHS-POP R2023A population counts for 2020 by dividing each sampled count by the estimated area of its native 3-arcsec raster cell. Receivers are spaced $100\,\mathrm{m}$ apart, each representing an area of $a_R=0.01\,\mathrm{km}^{2}$.

\paragraph{Sound propagation and noise increment}

For an unmanned aerial vehicle (UAV) at horizontal position $s$ and altitude $h$, and a ground receiver at $q_i$, the slant distance and aircraft-only sound level are
\begin{align}
r_i(s,h)
&=\sqrt{\lVert s-q_i\rVert_2^2+h^2},
\label{eq:noise-slant-distance}\\
L_i^{\mathrm{UAV}}(s,h)
&=70-20\log_{10}\!\left(\frac{r_i(s,h)}{100}\right)
-0.6\frac{r_i(s,h)}{1000}.
\label{eq:noise-propagation}
\end{align}
Distances are in metres and sound levels in $\mathrm{dB(A)}$. The reference-level parameter is $70\,\mathrm{dB(A)}$, the reference distance is $100\,\mathrm{m}$, and atmospheric attenuation is $0.6\,\mathrm{dB/km}$ over the full slant distance.

Energetic addition gives the total sound level and its increase above the background:
\begin{align}
L_i^{\mathrm{tot}}(s,h)
&=10\log_{10}\!\left(
10^{B_i/10}+10^{L_i^{\mathrm{UAV}}(s,h)/10}
\right),
\label{eq:noise-total-level}\\
\Delta L_i(s,h)
&=L_i^{\mathrm{tot}}(s,h)-B_i
\notag\\
&=10\log_{10}\!\left(
1+10^{[L_i^{\mathrm{UAV}}(s,h)-B_i]/10}
\right).
\label{eq:noise-exceedance}
\end{align}
The increment is converted into an exponential indicator, with negligible values discarded:
\begin{equation}
g(\Delta)=
\begin{cases}
2^{\Delta/3}-1, & \Delta>0.05\,\mathrm{dB},\\
0, & \text{otherwise},
\end{cases}
\label{eq:noise-exponential-indicator}
\end{equation}
where $\Delta$ is expressed numerically in decibels.

\paragraph{Population-weighted impact field}

Let $\mathcal R(s)$ contain receivers within the study area with assigned background data, positive population density, and horizontal distance $\lVert s-q_i\rVert_2\leq900\,\mathrm{m}$. The impact at aircraft position $s$ is
\begin{equation}
I_h(s)
=a_R\sum_{i\in\mathcal R(s)}
\frac{\rho_i}{\rho_{\mathrm{ref}}}\,
g\!\left(\Delta L_i(s,h)\right),
\label{eq:noise-population-impact}
\end{equation}
where $\rho_{\mathrm{ref}}=1000$ people per $\mathrm{km}^{2}$. The density ratio is not capped, and $a_R$ accounts for the ground area represented by each receiver.

For each cruise altitude $h\in\{300,500\}\,\mathrm{m}$, the field is precomputed at the centres of the
$25\,\mathrm{m}$ grid cells. Each centre represents a possible aircraft position, whose impact is summed over the surrounding $100\,\mathrm{m}$ receiver grid. A route uses the field corresponding to its cruise altitude.

\paragraph{Integration along a route}

For segment $e$, let $\mathcal C(e)$ denote the cells it intersects, $\ell_{e,k}$ its length within cell $k$, and $I_{k,h}$ the impact at that cell's centre. Defining $\widehat I_h(s)=I_{k,h}$ within cell $k$ gives
\begin{align}
E_e(h)
&=\sum_{k\in\mathcal C(e)}\ell_{e,k}I_{k,h}
\notag\\
&=\int_e\widehat I_h(s)\,\mathrm{d}\ell
\approx\int_e I_h(s)\,\mathrm{d}\ell.
\label{eq:noise-segment-exposure}
\end{align}
Lengths are measured in metres. Splitting segments at grid boundaries integrates the piecewise-constant field $\widehat I_h$ exactly.

Consider a route--altitude option $\rho$ with planar path $P_\rho$ and cruise altitude $h_\rho$. Let $\mathcal E(P_\rho)$ denote the segments of this path. Its planar length and noise exposure are
\begin{equation}
\ell_\rho=\sum_{e\in\mathcal E(P_\rho)}\ell_e,
\qquad
E_\rho=\sum_{e\in\mathcal E(P_\rho)}E_e(h_\rho).
\label{eq:noise-route-components}
\end{equation}

\paragraph{Flight-route costs}

The preceding calculation gives noise exposure. To include it in the optimization objective, we combine
exposure and distance using weights $\beta$ and $\alpha$:
\begin{equation}
c_e(h)=\alpha\,\ell_e+\beta\,E_e(h),
\label{eq:noise-segment-cost}
\end{equation}
where $\ell_e$ is segment length. The implementation uses $\alpha=0.014$ and $\beta=0.0011$. These are fixed modeling assumptions, not values derived from the exposure integral or empirically calibrated prices.

Adding the cost of vertical operations gives
\begin{equation}
C_\rho
=\alpha\,\ell_\rho+\beta\,E_\rho
+8\frac{h_\rho}{300}.
\label{eq:noise-route-cost}
\end{equation}
The last term charges one climb and one descent at a prescribed cost of $4$ per time unit and a reference vertical speed of $300\,\mathrm{m}$ per time unit. This stored route cost is retained when flight durations are rounded to planning intervals.

The propagation parameters and spatial discretization are numerical modeling assumptions. The noise calculation covers fixed-altitude cruise under a static background, excluding takeoff and landing noise. Flight exposures are added individually, without constructing a joint sound field for simultaneous aircraft. Distance-based integration makes exposure independent of departure time and duration rounding.

\begin{landscape}
\section{Complete paired results}
\label{app:t90-full-results}
\setcounter{table}{0}
\captionof{table}{Complete paired results under a nominal 3,600 s search limit ($T=90$).}
\label{tab:t90-full-results}
\begingroup\centering
\setlength{\tabcolsep}{2.3pt}
\renewcommand{\arraystretch}{0.94}
\fontsize{7.8}{9}\selectfont
\newsavebox{\tNinetyCompactResults}
\begin{lrbox}{\tNinetyCompactResults}
\begin{tabular}{@{}lrrrrrrrrrrrrrr@{}}
\toprule
& \multicolumn{7}{c}{B123} & \multicolumn{7}{c}{Lazy} \\
\cmidrule(lr){2-8}\cmidrule(l){9-15}
Instance & LB & UB & $\Delta$ & Gap (\%) & Time (s) & Cuts & Nodes & LB & UB & $\Delta$ & Gap (\%) & Time (s) & Cuts & Nodes \\
\midrule
R60\_D8\_V16\_K4\_1 & 13,281.66 & 13,281.66\textsuperscript{*} & 0.00 & 0.000 & 1555 & \textbf{3,510} & \textbf{2,893} & 13,281.66 & 13,281.66\textsuperscript{*} & 0.00 & 0.000 & \textbf{251} & 4,329 & 57,891 \\
R60\_D8\_V16\_K4\_2 & 11,979.09 & 11,979.09\textsuperscript{*} & 0.00 & 0.000 & 200 & \textbf{1,194} & \textbf{819} & 11,979.09 & 11,979.09\textsuperscript{*} & 0.00 & 0.000 & \textbf{48} & 1,509 & 9,949 \\
R60\_D8\_V16\_K4\_3 & 12,108.05 & 12,108.05\textsuperscript{*} & 0.00 & 0.000 & 383 & \textbf{2,068} & \textbf{925} & 12,108.05 & 12,108.05\textsuperscript{*} & 0.00 & 0.000 & \textbf{92} & 3,568 & 16,531 \\
R60\_D8\_V16\_K4\_4 & 10,677.84 & 10,677.84\textsuperscript{*} & 0.00 & 0.000 & 104 & \textbf{1,320} & \textbf{324} & 10,677.84 & 10,677.84\textsuperscript{*} & 0.00 & 0.000 & \textbf{58} & 2,378 & 9,313 \\
R60\_D8\_V16\_K4\_5 & 10,943.81 & 10,943.81\textsuperscript{*} & 0.00 & 0.000 & 49 & \textbf{1,254} & \textbf{135} & 10,943.81 & 10,943.81\textsuperscript{*} & 0.00 & 0.000 & \textbf{36} & 1,268 & 9,197 \\
\hline
R60\_D8\_V24\_K8\_1 & \textbf{12,910.05} & \textbf{12,919.78} & \textbf{9.72} & \textbf{0.075} & 3601 & \textbf{4,392} & \textbf{5,323} & 12,704.34 & 12,961.19 & 256.85 & 1.982 & 3615 & 5,978 & 138,492 \\
R60\_D8\_V24\_K8\_2 & 11,433.06 & 11,433.06\textsuperscript{*} & 0.00 & 0.000 & 1209 & \textbf{2,165} & \textbf{2,563} & 11,433.06 & 11,433.06\textsuperscript{*} & 0.00 & 0.000 & \textbf{213} & 3,754 & 23,903 \\
R60\_D8\_V24\_K8\_3 & 11,350.64 & 11,350.64\textsuperscript{*} & 0.00 & 0.000 & 3244 & \textbf{5,193} & \textbf{3,324} & 11,350.64 & 11,350.64\textsuperscript{*} & 0.00 & 0.000 & \textbf{745} & 8,258 & 69,662 \\
R60\_D8\_V24\_K8\_4 & 9,919.59 & 9,919.59\textsuperscript{*} & 0.00 & 0.000 & 1546 & \textbf{3,094} & \textbf{3,590} & 9,919.59 & 9,919.59\textsuperscript{*} & 0.00 & 0.000 & \textbf{174} & 4,294 & 20,717 \\
R60\_D8\_V24\_K8\_5 & 10,628.16 & 10,628.16\textsuperscript{*} & 0.00 & 0.000 & 230 & \textbf{2,203} & \textbf{468} & 10,628.16 & 10,628.16\textsuperscript{*} & 0.00 & 0.000 & \textbf{73} & 2,479 & 8,781 \\
\hline
R60\_D10\_V20\_K8\_1 & 12,147.83 & 12,147.83\textsuperscript{*} & 0.00 & 0.000 & 1555 & \textbf{2,880} & \textbf{2,032} & 12,147.83 & 12,147.83\textsuperscript{*} & 0.00 & 0.000 & \textbf{341} & 3,475 & 28,877 \\
R60\_D10\_V20\_K8\_2 & 12,009.54 & 12,009.54\textsuperscript{*} & 0.00 & 0.000 & 620 & \textbf{1,913} & \textbf{1,037} & 12,009.54 & 12,009.54\textsuperscript{*} & 0.00 & 0.000 & \textbf{220} & 3,257 & 14,262 \\
R60\_D10\_V20\_K8\_3 & 13,996.42 & 13,996.42\textsuperscript{*} & 0.00 & 0.000 & 271 & \textbf{1,630} & \textbf{346} & 13,996.42 & 13,996.42\textsuperscript{*} & 0.00 & 0.000 & \textbf{153} & 2,705 & 9,495 \\
R60\_D10\_V20\_K8\_4 & 10,555.88 & 10,555.88\textsuperscript{*} & 0.00 & 0.000 & 532 & \textbf{1,589} & \textbf{999} & 10,555.88 & 10,555.88\textsuperscript{*} & 0.00 & 0.000 & \textbf{87} & 1,848 & 6,026 \\
R60\_D10\_V20\_K8\_5 & 12,758.13 & 12,758.13\textsuperscript{*} & 0.00 & 0.000 & 347 & 2,160 & \textbf{370} & 12,758.13 & 12,758.13\textsuperscript{*} & 0.00 & 0.000 & \textbf{93} & \textbf{1,766} & 5,222 \\
\hline
R60\_D10\_V30\_K4\_1 & 11,967.93 & 11,967.93\textsuperscript{*} & 0.00 & 0.000 & 71 & 1,270 & \textbf{241} & 11,967.93 & 11,967.93\textsuperscript{*} & 0.00 & 0.000 & \textbf{52} & \textbf{1,196} & 10,200 \\
R60\_D10\_V30\_K4\_2 & 11,938.84 & 11,938.84\textsuperscript{*} & 0.00 & 0.000 & 134 & \textbf{1,069} & \textbf{597} & 11,938.84 & 11,938.84\textsuperscript{*} & 0.00 & 0.000 & \textbf{57} & 1,555 & 10,290 \\
R60\_D10\_V30\_K4\_3 & 13,289.15 & 13,289.15\textsuperscript{*} & 0.00 & 0.000 & 66 & \textbf{1,189} & \textbf{214} & 13,289.15 & 13,289.15\textsuperscript{*} & 0.00 & 0.000 & \textbf{58} & 1,860 & 9,223 \\
R60\_D10\_V30\_K4\_4 & 10,274.08 & 10,274.08\textsuperscript{*} & 0.00 & 0.000 & 40 & 863 & \textbf{15} & 10,274.08 & 10,274.08\textsuperscript{*} & 0.00 & 0.000 & \textbf{17} & \textbf{853} & 2,248 \\
R60\_D10\_V30\_K4\_5 & 12,401.81 & 12,401.81\textsuperscript{*} & 0.00 & 0.000 & 22 & 816 & \textbf{11} & 12,401.81 & 12,401.81\textsuperscript{*} & 0.00 & 0.000 & \textbf{18} & \textbf{652} & 2,276 \\
\hline
R100\_D8\_V16\_K8\_1 & \textbf{23,155.52} & 23,954.99 & 799.46 & 3.337 & 3600 & \textbf{6,038} & \textbf{1,057} & 22,968.33 & -- & -- & -- & 3605 & 10,075 & 60,035 \\
R100\_D8\_V16\_K8\_2 & \textbf{22,543.72} & 23,462.69 & 918.97 & 3.917 & 3600 & \textbf{4,953} & \textbf{844} & 22,450.04 & -- & -- & -- & 3605 & 9,701 & 56,915 \\
R100\_D8\_V16\_K8\_3 & \textbf{22,531.94} & 23,070.54 & 538.60 & 2.335 & 3600 & \textbf{5,478} & \textbf{906} & 22,398.10 & -- & -- & -- & 3606 & 10,731 & 53,967 \\
R100\_D8\_V16\_K8\_4 & \textbf{19,739.14} & 20,256.09 & 516.95 & 2.552 & 3600 & \textbf{5,447} & \textbf{796} & 19,586.53 & -- & -- & -- & 3609 & 9,758 & 56,940 \\
R100\_D8\_V16\_K8\_5 & \textbf{20,224.80} & 20,888.32 & 663.53 & 3.177 & 3601 & \textbf{5,179} & \textbf{707} & 20,102.33 & -- & -- & -- & 3606 & 10,660 & 52,942 \\
\hline
R100\_D8\_V24\_K4\_1 & \textbf{20,805.46} & 21,588.78 & 783.33 & 3.628 & 3600 & \textbf{7,282} & \textbf{1,232} & 20,167.95 & -- & -- & -- & 3605 & 8,442 & 109,872 \\
R100\_D8\_V24\_K4\_2 & \textbf{19,739.06} & 20,161.90 & 422.85 & 2.097 & 3600 & \textbf{8,071} & \textbf{1,557} & 19,355.32 & -- & -- & -- & 3604 & 9,989 & 111,955 \\
R100\_D8\_V24\_K4\_3 & \textbf{19,568.66} & 20,439.73 & 871.07 & 4.262 & 3600 & \textbf{7,975} & \textbf{1,563} & 19,250.32 & -- & -- & -- & 3605 & 10,216 & 101,742 \\
R100\_D8\_V24\_K4\_4 & \textbf{17,453.04} & 17,467.99 & 14.95 & 0.086 & 3601 & 7,526 & \textbf{1,412} & 16,890.16 & -- & -- & -- & 3604 & \textbf{7,412} & 99,090 \\
R100\_D8\_V24\_K4\_5 & \textbf{17,606.16} & 18,394.23 & 788.06 & 4.284 & 3600 & \textbf{8,001} & \textbf{1,073} & 17,243.86 & -- & -- & -- & 3604 & 11,628 & 90,603 \\
\hline
R100\_D10\_V20\_K4\_1 & \textbf{20,977.69} & 20,977.69\textsuperscript{*} & 0.00 & 0.000 & 3113 & \textbf{5,761} & \textbf{1,942} & 20,752.26 & -- & -- & -- & 3607 & 6,497 & 99,233 \\
R100\_D10\_V20\_K4\_2 & \textbf{21,026.61} & 21,425.84 & 399.23 & 1.863 & 3600 & \textbf{5,744} & \textbf{1,442} & 20,878.34 & -- & -- & -- & 3605 & 7,423 & 78,612 \\
R100\_D10\_V20\_K4\_3 & \textbf{24,182.67} & 24,884.51 & 701.84 & 2.820 & 3600 & \textbf{5,937} & \textbf{1,000} & 23,971.45 & -- & -- & -- & 3604 & 7,933 & 71,810 \\
R100\_D10\_V20\_K4\_4 & 18,738.67 & 19,401.02 & 662.35 & 3.414 & 3600 & \textbf{5,264} & \textbf{1,426} & \textbf{18,752.99} & \textbf{18,752.99}\textsuperscript{*} & \textbf{0.00} & \textbf{0.000} & 2999 & 6,307 & 86,056 \\
R100\_D10\_V20\_K4\_5 & 22,105.95 & 22,108.09 & 2.14 & 0.010 & 3600 & \textbf{5,353} & \textbf{1,347} & \textbf{22,108.09} & 22,108.09\textsuperscript{*} & \textbf{0.00} & \textbf{0.000} & 1083 & 5,645 & 40,985 \\
\hline
R100\_D10\_V30\_K8\_1 & \textbf{19,153.07} & 19,392.02 & 238.95 & 1.232 & 3600 & \textbf{6,581} & \textbf{1,438} & 18,779.65 & -- & -- & -- & 3613 & 7,978 & 113,345 \\
R100\_D10\_V30\_K8\_2 & \textbf{19,625.18} & 19,947.97 & 322.79 & 1.618 & 3600 & \textbf{7,143} & \textbf{918} & 19,245.30 & -- & -- & -- & 3605 & 7,568 & 112,702 \\
R100\_D10\_V30\_K8\_3 & \textbf{22,114.49} & 22,850.49 & 735.99 & 3.221 & 3600 & \textbf{7,067} & \textbf{659} & 21,705.81 & -- & -- & -- & 3607 & 10,721 & 80,892 \\
R100\_D10\_V30\_K8\_4 & \textbf{17,372.46} & 17,535.73 & 163.28 & 0.931 & 3600 & \textbf{6,024} & \textbf{1,120} & 17,132.76 & -- & -- & -- & 3604 & 8,508 & 98,958 \\
R100\_D10\_V30\_K8\_5 & \textbf{20,116.23} & 20,473.67 & 357.44 & 1.746 & 3600 & \textbf{7,029} & \textbf{572} & 19,893.37 & -- & -- & -- & 3605 & 11,530 & 85,742 \\
\bottomrule
\end{tabular}
\end{lrbox}
\ifdim\wd\tNinetyCompactResults>\linewidth
  \sbox{\tNinetyCompactResults}{\resizebox{\linewidth}{!}{\usebox{\tNinetyCompactResults}}}
\fi
\ifdim\dimexpr\ht\tNinetyCompactResults+\dp\tNinetyCompactResults\relax>0.84\textheight
  \resizebox*{!}{0.84\textheight}{\usebox{\tNinetyCompactResults}}
\else
  \usebox{\tNinetyCompactResults}
\fi
\par\endgroup
\end{landscape}

\bibliographystyle{elsarticle-harv} 
\bibliography{cas-refs}

@ARTICLE{Cohen2021,
  author={Cohen, Adam P. and Shaheen, Susan A. and Farrar, Emily M.},
  journal={IEEE Transactions on Intelligent Transportation Systems}, 
  title={Urban Air Mobility: History, Ecosystem, Market Potential, and Challenges}, 
  year={2021},
  volume={22},
  number={9},
  pages={6074-6087},
  doi={10.1109/TITS.2021.3082767}}

@article{HILDEMANN2023102356,
title = {3D-flight route optimization for air-taxis in urban areas with Evolutionary Algorithms and GIS},
journal = {Journal of Air Transport Management},
volume = {107},
pages = {102356},
year = {2023},
issn = {0969-6997},
doi = {https://doi.org/10.1016/j.jairtraman.2022.102356},
url = {https://www.sciencedirect.com/science/article/pii/S0969699722001752},
author = {Moritz Hildemann and Judith A. Verstegen}
}

@article{SHON2025102720,
title = {An optimization framework for urban air mobility (UAM) planning and operations},
journal = {Journal of Air Transport Management},
volume = {124},
pages = {102720},
year = {2025},
issn = {0969-6997},
doi = {https://doi.org/10.1016/j.jairtraman.2024.102720},
url = {https://www.sciencedirect.com/science/article/pii/S0969699724001856},
author = {Heeseung Shon and Jinwoo Lee}
}

@article{Guo2026,
author = {Dongbo Guo and Enjian Yao and Rongseng Chen and Tianyu Zhang and Yongxiang Zhang},
title = {Optimising eVTOL vehicle scheduling problem in on-demand urban air mobility system with capacitated vertiports},
journal = {Transportmetrica A: Transport Science},
volume = {0},
number = {0},
pages = {2625143},
year = {2026},
publisher = {Taylor \& Francis},
doi = {10.1080/23249935.2026.2625143}}

@inproceedings{zhu2019pre,
  title={Pre-departure planning for urban air mobility flights with dynamic airspace reservation},
  author={Zhu, Guodong and Wei, Peng},
  booktitle={AIAA Aviation 2019 Forum},
  pages={3519},
  year={2019}
}

@article{GAO2024104740,
title = {Noise-aware and equitable urban air traffic management: An optimization approach},
journal = {Transportation Research Part C: Emerging Technologies},
volume = {165},
pages = {104740},
year = {2024},
issn = {0968-090X},
doi = {https://doi.org/10.1016/j.trc.2024.104740},
url = {https://www.sciencedirect.com/science/article/pii/S0968090X24002614},
author = {Zhenyu Gao and Yue Yu and Qinshuang Wei and Ufuk Topcu and John-Paul Clarke}
}

@article{KYI2026100231,
title = {Noise-aware urban drone delivery: Integrating demand modeling, routing optimization and spatial noise exposure assessment},
journal = {Journal of Urban Mobility},
volume = {9},
pages = {100231},
year = {2026},
issn = {2667-0917},
doi = {https://doi.org/10.1016/j.urbmob.2026.100231},
url = {https://www.sciencedirect.com/science/article/pii/S2667091726000488},
author = {Ei Phyu Kyi and Tao Feng}
}

@ARTICLE{Tan2024,
  author={Tan, Qichen and Zhong, Siyang and Qu, Renhao and Li, Yuhong and Zhou, Peng and Lo, Hong Kam and Zhang, Xin},
  journal={IEEE Intelligent Transportation Systems Magazine}, 
  title={Low-Noise Flight Path Planning of Drones Based on a Virtual Flight Noise Simulator: A Vehicle Routing Problem}, 
  year={2024},
  volume={16},
  number={6},
  pages={56-71},
  doi={10.1109/MITS.2024.3396430}}

@INPROCEEDINGS{Korringa2025,
  author={Korringa, Marielle and Snyder, Paul and Ullrich, Michael and Vidhyadharan, Sreejith and Lunnie, Robbie},
  booktitle={2025 Integrated Communications, Navigation and Surveillance Conference (ICNS)}, 
  title={Optimal Air Corridor Design for Efficient Integration of AAM Vehicles into the NAS}, 
  year={2025},
  volume={},
  number={},
  pages={1-7},
  doi={10.1109/ICNS65417.2025.10976768}}

@article{CHEN2026112176,
title = {Flight path planning with ground risk and noise mitigation for urban air mobility},
journal = {Reliability Engineering \& System Safety},
volume = {270},
pages = {112176},
year = {2026},
issn = {0951-8320},
doi = {https://doi.org/10.1016/j.ress.2025.112176},
url = {https://www.sciencedirect.com/science/article/pii/S0951832025013754},
author = {Yini Chen and Rao Fu and Shuai Jia}
}

@article{LEHOUILLIER2017696AirCollsionClique,
title = {Two decomposition algorithms for solving a minimum weight maximum clique model for the air conflict resolution problem},
journal = {European Journal of Operational Research},
volume = {256},
number = {3},
pages = {696-712},
year = {2017},
issn = {0377-2217},
doi = {https://doi.org/10.1016/j.ejor.2016.07.008},
url = {https://www.sciencedirect.com/science/article/pii/S0377221716305458},
author = {Thibault Lehouillier and Jérémy Omer and François Soumis and Guy Desaulniers}
}

@article{CAFIERI202367Continous,
title = {Mixed-integer nonlinear and continuous optimization formulations for aircraft conflict avoidance via heading and speed deviations},
journal = {European Journal of Operational Research},
volume = {310},
number = {2},
pages = {670-679},
year = {2023},
issn = {0377-2217},
doi = {https://doi.org/10.1016/j.ejor.2023.03.002},
url = {https://www.sciencedirect.com/science/article/pii/S0377221723001959},
author = {Sonia Cafieri and Andrew R. Conn and Marcel Mongeau}
}

@article{DIAS2022520,
title = {Disjunctive linear separation conditions and mixed-integer formulations for aircraft conflict resolution},
journal = {European Journal of Operational Research},
volume = {296},
number = {2},
pages = {520-538},
year = {2022},
issn = {0377-2217},
doi = {https://doi.org/10.1016/j.ejor.2021.03.059},
url = {https://www.sciencedirect.com/science/article/pii/S0377221721002897},
author = {Fernando H.C. Dias and Hassan Hijazi and David Rey}
}

@article{TS_ATC_review,
author = {Pelegr{\'i}n, Mercedes and D’Ambrosio, Claudia},
title = {Aircraft Deconfliction via Mathematical Programming: Review and Insights},
journal = {Transportation Science},
volume = {56},
number = {1},
pages = {118-140},
year = {2022},
doi = {10.1287/trsc.2021.1056}
}

@INPROCEEDINGS{Chour2023,
  author={Chour, Kenny and Pradeep, Priyank and Munishkin, Alexey A. and Kalyanam, Krishna M.},
  booktitle={2023 IEEE/AIAA 42nd Digital Avionics Systems Conference (DASC)}, 
  title={Aerial Vehicle Routing and Scheduling for UAS Traffic Management: A Hybrid Monte Carlo Tree Search Approach}, 
  year={2023},
  volume={},
  number={},
  pages={1-9},
  doi={10.1109/DASC58513.2023.10311314}}

@ARTICLE{Su2025,
  author={Su, Yu and Xu, Yan},
  journal={IEEE Transactions on Intelligent Vehicles}, 
  title={A Risk Assessment Method for Mid-Air Collisions in Urban Air Mobility Operations}, 
  year={2025},
  volume={10},
  number={2},
  pages={1327-1341},
  doi={10.1109/TIV.2024.3426915}}

@article{haba2025routing,
  title={Routing and scheduling optimization for urban air mobility fleet management using quantum annealing: R. Haba et al.},
  author={Haba, Renichiro and Mano, Takuya and Ueda, Ryosuke and Ebe, Genichiro and Takeda, Kohei and Terabe, Masayoshi and Ohzeki, Masayuki},
  journal={Scientific Reports},
  volume={15},
  number={1},
  pages={4326},
  year={2025},
  publisher={Nature Publishing Group UK London}
}

@article{WENG2026105477,
title = {Real-time traffic simulation and management for large-scale urban air mobility: Integrating route guidance and collision avoidance},
journal = {Transportation Research Part C: Emerging Technologies},
volume = {183},
pages = {105477},
year = {2026},
issn = {0968-090X},
doi = {https://doi.org/10.1016/j.trc.2025.105477},
url = {https://www.sciencedirect.com/science/article/pii/S0968090X25004814},
author = {Canqiang Weng and Can Chen and Jingjun Tan and Tianlu Pan and Renxin Zhong}
}

@article{TANG2021103324,
title = {Automated flight planning of high-density urban air mobility},
journal = {Transportation Research Part C: Emerging Technologies},
volume = {131},
pages = {103324},
year = {2021},
issn = {0968-090X},
doi = {https://doi.org/10.1016/j.trc.2021.103324},
url = {https://www.sciencedirect.com/science/article/pii/S0968090X21003302},
author = {Hualong Tang and Yu Zhang and Vahid Mohmoodian and Hadi Charkhgard}
}

@ARTICLE{Pooladsanj2026,
  author={Pooladsanj, Milad and Savla, Ketan and Ioannou, Petros A.},
  journal={IEEE Transactions on Intelligent Transportation Systems}, 
  title={Throughput Maximizing Takeoff Scheduling for eVTOL Vehicles in On-Demand Urban Air Mobility Systems}, 
  year={2026},
  volume={27},
  number={2},
  pages={1937-1951},
  doi={10.1109/TITS.2025.3640561}}

@article{ATAMTURK200040,
title = {Conflict graphs in solving integer programming problems},
journal = {European Journal of Operational Research},
volume = {121},
number = {1},
pages = {40-55},
year = {2000},
issn = {0377-2217},
doi = {https://doi.org/10.1016/S0377-2217(99)00015-6},
url = {https://www.sciencedirect.com/science/article/pii/S0377221799000156},
author = {Alper Atamtürk and George L. Nemhauser and Martin W.P. Savelsbergh}
}

@article{ZWANEVELD200114,
title = {Routing trains through a railway station based on a node packing model},
journal = {European Journal of Operational Research},
volume = {128},
number = {1},
pages = {14-33},
year = {2001},
issn = {0377-2217},
doi = {https://doi.org/10.1016/S0377-2217(00)00087-4},
url = {https://www.sciencedirect.com/science/article/pii/S0377221700000874},
author = {Peter J. Zwaneveld and Leo G. Kroon and Stan P.M. {van Hoesel}}
}

@article{ARAUJO2020104782,
title = {Strong bounds for resource constrained project scheduling: Preprocessing and cutting planes},
journal = {Computers \& Operations Research},
volume = {113},
pages = {104782},
year = {2020},
issn = {0305-0548},
doi = {https://doi.org/10.1016/j.cor.2019.104782},
url = {https://www.sciencedirect.com/science/article/pii/S0305054819302242},
author = {Janniele A.S. Araujo and Haroldo G. Santos and Bernard Gendron and Sanjay Dominik Jena and Samuel S. Brito and Danilo S. Souza}
}

@article{BRITO2021105176,
title = {Preprocessing and cutting planes with conflict graphs},
journal = {Computers \& Operations Research},
volume = {128},
pages = {105176},
year = {2021},
issn = {0305-0548},
doi = {https://doi.org/10.1016/j.cor.2020.105176},
url = {https://www.sciencedirect.com/science/article/pii/S0305054820302938},
author = {Samuel Souza Brito and Haroldo Gambini Santos}
}

@article{MOURA2025433,
title = {Compact formulations and valid inequalities for parallel machine scheduling with conflicts},
journal = {European Journal of Operational Research},
volume = {325},
number = {3},
pages = {433-443},
year = {2025},
issn = {0377-2217},
doi = {https://doi.org/10.1016/j.ejor.2025.04.006},
url = {https://www.sciencedirect.com/science/article/pii/S0377221725002565},
author = {Phablo F.S. Moura and Roel Leus and Hande Yaman}
}

@MISC{stockholmnoise,
  author={{Stockholms stad, Milj{\"o}f{\"o}rvaltningen}},
  title={{Bullerkartan 2022}: V{\"a}g, t{\aa}g- och flyg {Leq}},
  year={2022},
  howpublished={City of Stockholm Environmental Administration [Dataset]},
  url={https://www.arcgis.com/home/item.html?id=ce2b61853e594b7fa70de53eed7e9ef2},
  note={Road, rail, and aircraft 24-hour equivalent noise levels. Accessed 16 September 2026}
}

@article{ghsl2024,
author = {Martino Pesaresi and Marcello Schiavina and Panagiotis Politis and Sergio Freire and Katarzyna Krasnodębska and Johannes H. Uhl and Alessandra Carioli and Christina Corbane and Lewis Dijkstra and Pietro Florio and Hannah K. Friedrich and Jing Gao and Stefan Leyk and Linlin Lu and Luca Maffenini and Ines Mari-Rivero and Michele Melchiorri and Vasileios Syrris and Jamon Van Den Hoek and Thomas Kemper},
title = {Advances on the Global Human Settlement Layer by joint assessment of Earth Observation and population survey data},
journal = {International Journal of Digital Earth},
volume = {17},
number = {1},
pages = {2390454},
year = {2024},
publisher = {Taylor \& Francis},
doi = {10.1080/17538947.2024.2390454}
}

@MISC{ghspop,
  author={Schiavina, Marcello and Freire, Sergio and Carioli, Alessandra and MacManus, Kytt},
  title={{GHS-POP R2023A}: {GHS} population grid multitemporal (1975--2030)},
  year={2023},
  howpublished={European Commission, Joint Research Centre [Dataset]},
  doi={10.2905/2FF68A52-5B5B-4A22-8F40-C41DA8332CFE},
  url={https://doi.org/10.2905/2FF68A52-5B5B-4A22-8F40-C41DA8332CFE},
  note={2020 epoch, WGS84, 3 arc-second resolution. Accessed 16 September 2026}
}

@article{BACLET2022103114,
title = {From strategic noise maps to receiver-centric noise exposure sensitivity mapping},
journal = {Transportation Research Part D: Transport and Environment},
volume = {102},
pages = {103114},
year = {2022},
issn = {1361-9209},
doi = {https://doi.org/10.1016/j.trd.2021.103114},
url = {https://www.sciencedirect.com/science/article/pii/S1361920921004089},
author = {Sacha Baclet and Siddharth Venkataraman and Romain Rumpler and Robin Billsjö and Johannes Horvath and Per Erik Österlund}}

@article {Venkataraman:2023,
title = "Optimizing noise exposure in the Vehicle Routing Problem: A case study of last-mile freight deliveries in Stockholm",
journal = "INTER-NOISE and NOISE-CON Congress and Conference Proceedings",
parent_itemid = "infobike://ince/incecp",
publishercode ="ince",
year = "2023",
volume = "265",
number = "1",
pages = "6138-6149",
itemtype = "ARTICLE",
issn = "0736-2935",
url = "https://ince.publisher.ingentaconnect.com/content/ince/incecp/2023/00000265/00000001/art00016",
doi = "10.3397/IN_2022_0914",
author = "Venkataraman, Siddharth and Baclet, Sacha and Rumpler, Romain",
}
\end{document}